\newtheorem{Thm}{Theorem}[section]
\newtheorem{Prop}[Thm]{Proposition}
\newtheorem{Lem}[Thm]{Lemma}
\newtheorem{Cor}[Thm]{Corollary}
\theoremstyle{definition}
\newtheorem{Def}[Thm]{Definition}
\newtheorem{Rem}[Thm]{Remark}
\newtheorem{Ex}[Thm]{Example}
\newcommand{\Z}{\mathbf{Z}}
\newcommand{\R}{\mathbf{R}}
\newcommand{\C}{\mathbf{C}}
\newcommand{\Q}{\mathbf{Q}}
\newcommand{\K}{\mathbf{K}}
\newcommand{\T}{\mathbf{T}}
\newcommand{\GL}{\textnormal{GL}}
\newcommand{\SL}{\textnormal{SL}}
\newcommand{\PSL}{\textnormal{PSL}}
\newcommand{\PGL}{\textnormal{PGL}}
\newcommand{\HH}{\mathcal{H}}
\newcommand{\eps}{\varepsilon}
\title{The Howe-Moore property for real and $p$-adic groups}
\author[Cluckers]{Raf Cluckers}
\address{Universit\'e Lille 1, Laboratoire Painlev\'e, CNRS - UMR 8524,
Cit\'e Scientifique, F-59655
Villeneuve d'Ascq Cedex, France; and
Katholieke Universiteit Leuven, Department of Mathema-tics,
Celestijnenlaan 200B, B-3001 Leuven, Belgium}
\email{raf.cluckers@wis.kuleuven.be}
\author[Cornulier]{Yves Cornulier}
\address{IRMAR, Campus de Beaulieu, 35042 Rennes CEDEX, France}
\email{yves.decornulier@univ-rennes1.fr}
\author[Louvet]{Nicolas Louvet}
\address{Institut de Mathématiques - Université de Neuchâtel,
Rue Emile Argand 11, CH-2007 Neuchâtel - Switzerland}
\email{nicolas.louvet@unine.ch}
\author[Tessera]{Romain Tessera}
\address{UMPA, ENS de Lyon,
46, allée d'Italie, F-69364 Lyon Cedex 07 - France}
\email{rtessera@umpa.ens-lyon.fr}
\author[Valette]{Alain Valette}
\address{Institut de Mathématiques - Université de Neuchâtel,
Rue Emile Argand 11, CH-2007 Neuchâtel - Switzerland}
\email{alain.valette@unine.ch}
\date{\today}
\subjclass[2000]{Primary 22D10, Secondary 22D05, 22E35, 43A35}
\begin{document}

\baselineskip=16pt

\maketitle

\begin{abstract}
We consider in this paper a relative version of the Howe-Moore Property, about vanishing at infinity of coefficients of unitary representations. We characterize this property in terms of ergodic measure-preserving actions. We also characterize, for linear Lie groups or $p$-adic Lie groups, the pairs with the relative Howe-Moore Property with respect to a closed, normal subgroup. This involves, in one direction, structural results on locally compact groups all of whose proper closed characteristic subgroups are compact, and, in the other direction, some results about the vanishing at infinity of oscillatory integrals.
\end{abstract}

\section{Introduction}

A locally compact group $G$ has the {\it Howe-Moore property} if any
unitary representation $\pi$ of $G$ without non-zero fixed vector is
a $C_{0}$-representation, i.e. the coefficients of $\pi$ vanish at
infinity on $G$. It is a basic result of Howe and Moore
\cite{HowMoo} that a connected, simple real Lie group with finite center, has this property. More generally, so does the subgroup generated by unipotent elements in a simple
algebraic group over a local field. Other examples of Howe-Moore groups
include the bicolored automorphism groups of regular or biregular trees
of bounded degree, see \cite{LubMoz}. The Howe-Moore property has
applications to ergodic theory, since every measure-preserving ergodic action of a non-compact Howe-Moore group on a probability space, is necessarily mixing (see \cite{BeMa,Zim}). We first observe that, for second countable groups, the converse is true:

\begin{Prop}\label{Prop:mixing} Let $G$ be a second countable, locally compact group. The group $G$ has the Howe-Moore property if and only if every ergodic, measure preserving action of $G$ on a probability space, is mixing.
\end{Prop}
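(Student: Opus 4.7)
The forward implication is classical: for any ergodic measure-preserving action $G \curvearrowright (X,\mu)$, the Koopman representation on $L^2_0(X,\mu) = L^2(X,\mu) \ominus \C$ has no non-zero invariant vector (by ergodicity), so the Howe--Moore property gives vanishing at infinity of its coefficients, which is precisely the mixing of the action.

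For the converse, assume every ergodic measure-preserving $G$-action on a probability space is mixing, and let $\pi$ be a unitary representation of $G$ with no non-zero invariant vector; we must show $\pi$ is $C_0$. Restricting to cyclic subrepresentations (second countability ensures these are separable) reduces us to $\HH_\pi$ separable, and the claim is vacuous when $G$ is compact, so we may assume $G$ non-compact. The first step is to rule out non-zero finite-dimensional subrepresentations of $\pi$. Suppose $\pi_0 \subseteq \pi$ is such a subrepresentation; its image in $U(\dim \pi_0)$ has compact closure $K$, non-trivial because $\pi_0$ has no invariant vector. The translation action $G \curvearrowright (K, \mathrm{Haar})$ through the continuous map $\pi_0 \colon G \to K$ is measure-preserving and ergodic (the orbit of $e$ is dense). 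Picking $g_n \to \infty$ in $G$, compactness of $K$ allows us to assume $\pi_0(g_n) \to k_0$ along a subsequence, and multiplying the $g_n$ on the left by a fixed element if necessary ensures $k_0 \ne e$; choosing $f \in L^2_0(K)$ with $\langle \lambda_K(k_0) f, f\rangle \ne 0$ then witnesses the failure of mixing, contradicting the hypothesis.

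Once this is done, the second step invokes the Gaussian construction. Viewing $\pi$ as a continuous orthogonal representation $\pi_\R$ on the underlying real Hilbert space $\HH_{\pi,\R}$, a non-zero real finite-dimensional subrepresentation of $\pi_\R$ would complexify to a finite-dimensional subrepresentation of $\pi \oplus \overline{\pi}$, and projecting would produce one in $\pi$ itself; so $\pi_\R$ also has no such subrepresentation. The associated Gaussian probability $G$-space $(X,\mu)$ has Koopman representation $\bigoplus_{n \geq 1} S^n(\HH_{\pi,\R} \otimes_\R \C)$ on $L^2_0(X,\mu)$; the absence of finite-dimensional subrepresentations of $\pi_\R$ ensures none of these symmetric powers contains a non-zero invariant vector, so the Gaussian action is ergodic. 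By hypothesis it is then mixing, which is equivalent to $\pi_\R$---and hence $\pi$---being a $C_0$-representation.

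The main obstacle is Step~1: finite-dimensional subrepresentations are exactly the obstruction to ergodicity of the Gaussian action, and removing them requires the ad~hoc ergodic-but-non-mixing translation action on $\overline{\pi_0(G)}$. Step~2 is then a formal consequence of the standard dictionary between properties of an orthogonal representation and of its Gaussian action.
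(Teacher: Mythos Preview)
Your proof is correct and follows the same overall strategy as the paper (which deduces Proposition~\ref{Prop:mixing} from the relative Proposition~\ref{Prop:relmix} with $H=G$): both directions use the Koopman dictionary, and for the converse both apply the Gaussian construction to the realification $\pi_\R$, reducing matters to showing that no symmetric power $S^k\pi_\R$ has a nonzero invariant vector, equivalently that $\pi_\R$ (hence $\pi$) has no nonzero finite-dimensional subrepresentation.

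The one genuine difference is how this last obstruction is removed. You handle it directly in Step~1: a finite-dimensional $\pi_0\subseteq\pi$ yields the translation $G$-action on the compact group $K=\overline{\pi_0(G)}\subset U(\dim\pi_0)$, which is ergodic (dense image) but not mixing (coefficients of the Koopman representation accumulate at a nonzero value along a sequence $g_n\to\infty$). The paper instead appeals to Lemma~\ref{finitedim}(a) in the ``$H$ normal'' case; but that lemma already assumes the relative Howe--Moore property, so for $H=G$ the reference is circular as written. Your Step~1 supplies precisely the missing ingredient---in effect it shows that the mixing hypothesis forces $G$ to be minimally almost periodic, which is the other alternative under which the paper's argument goes through cleanly. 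So the two proofs coincide except that yours is self-contained at this point.
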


The Howe-Moore property imposes quite stringent algebraic conditions on the ambient group (e.g. every proper, closed normal subgroup is compact, see Proposition \ref{perm2} below). Using this observation, we prove two structural results on Howe-Moore groups. The first holds for arbitrary locally compact groups. Say that a topological group is {\it locally elliptic} if every finite subset topologically generates a compact subgroup.

\begin{Prop}\label{loccompHM} Let $G$ be a non-compact, locally compact group with the Howe-Moore property. Then either $G$ is locally elliptic and not compactly generated, or there exists a compact normal subgroup $K\vartriangleleft G$ such that $G/K$ is topologically simple.
\end{Prop}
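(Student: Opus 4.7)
The plan is to study the family $\mathcal{K}$ of compact normal subgroups of $G$, ordered by inclusion; since $L_1L_2\in\mathcal{K}$ whenever $L_1,L_2\in\mathcal{K}$, this family is directed. I set $W=\overline{\bigcup_{L\in\mathcal{K}}L}$, a closed normal subgroup of $G$, and apply Proposition~\ref{perm2} to split into the two cases $W$ compact and $W=G$.

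In the first case, $W\in\mathcal{K}$ is its unique maximum. Any proper closed normal subgroup $M/W$ of $G/W$ pulls back to a proper closed normal subgroup $M$ of $G$ containing $W$; Proposition~\ref{perm2} forces $M$ to be compact, and hence $M\subseteq W$ by maximality, so $M=W$ and $M/W$ is trivial. Thus $G/W$ is topologically simple and I take $K=W$, which gives the second alternative in the statement.

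The delicate case is $W=G$, where the first alternative must be established. For local ellipticity I fix a finite subset $F\subseteq G$ and consider the closed normal subgroup $N_F$ it generates; Proposition~\ref{perm2} forces $N_F$ either to be compact (so that $\overline{\langle F\rangle}\subseteq N_F$ is compact, as required) or to equal $G$. The crux is to exclude the second possibility. The guiding observation is that for finite subsets $F'\subseteq\bigcup\mathcal{K}$ the directedness of $\mathcal{K}$ places $F'$ inside a single compact normal $L\in\mathcal{K}$, so $N_{F'}\subseteq L$ is compact immediately; I then propose to transfer this conclusion to a general $F\subseteq G$ by writing $F\subseteq LV$ with $L\in\mathcal{K}$ and $V$ a small compact symmetric neighborhood of the identity (possible by density of $\bigcup\mathcal{K}$ together with compactness of $F$), and exploiting the identity $\langle L\cup V\rangle=L\langle V\rangle$ (valid because $L$ is normal) to reduce matters to the compactly generated open subgroup $\langle V\rangle$. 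The main obstacle is precisely this density-to-containment transfer: approximating a finite subset of $G$ by a finite subset of the dense normal subgroup $\bigcup\mathcal{K}$ is straightforward, but controlling the closed normal subgroup $N_F$ under approximation of $F$ is not, and this is where the technical work lies.

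Once local ellipticity is in hand, non-compact generation follows essentially for free: if $G$ were compactly generated by some compact $C$, then running the same approximation argument on $C$ in place of $F$ would trap $C$, and hence all of $G=\overline{\langle C\rangle}$, inside a compact subgroup of $G$, contradicting the standing non-compactness hypothesis.
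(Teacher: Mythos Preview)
Your handling of the case $W$ compact is correct and coincides with the paper's. The difficulty is entirely in the case $W=G$, and there your argument has a genuine gap.

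The $LV$ reduction yields $\overline{\langle F\rangle}\subseteq L\langle V\rangle$, so what you really need is that $\langle V\rangle$ can be chosen compact. By Proposition~\ref{open} this holds whenever $\langle V\rangle\neq G$, i.e.\ whenever $G$ admits a compact open subgroup. Since $G_0$ is closed normal, Proposition~\ref{perm2} forces $G_0$ compact or $G_0=G$; in the former case your approach goes through cleanly (take $V$ itself to be a compact open subgroup, so $LV$ is a compact subgroup containing $F$). But when $G$ is connected, every neighbourhood of the identity generates $G$ and your reduction is vacuous; the ``technical work'' you defer has no starting point along these lines. You would need a separate argument---for instance via Gleason--Yamabe, reducing to connected Lie groups where every compact normal subgroup lies in a fixed maximal compact subgroup so that $W$ is automatically compact---to show that a connected non-compact Howe-Moore group cannot have $W$ dense. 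This is not addressed. (A smaller point: the $N_F$ framing is a detour. Even where your $LV$ argument succeeds it produces $\overline{\langle F\rangle}$ compact, which is exactly local ellipticity; it neither gives nor needs compactness of the normal closure $N_F$.)

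The paper avoids the case split by quoting two results of Platonov: first, there exists a \emph{closed} locally elliptic characteristic subgroup $R_{ell}(G)$ containing every compact normal subgroup, so density of $\bigcup\mathcal{K}$ forces $G=R_{ell}(G)$ outright; second, in a locally elliptic locally compact group every compact subset lies in a compact subgroup, whence $G$ non-compact implies $G$ not compactly generated.
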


Proposition \ref{loccompHM} gives a strong restriction on the class of groups with the Howe-Moore Property. To obtain more examples, we introduce a relative version of the Howe-Moore property.

\begin{Def} Let $H$ be a closed subgroup of the locally compact group $G$. The pair $(G,H)$ has the {\it relative Howe-Moore property} if every representation $\pi$ of $G$ either has $H$-invariant vectors, or the restriction $\pi|_H$ is a $C_0$-representation of $H$.
\end{Def}

We characterize such pairs in the case of groups having a faithful linear representation over a local field of characteristic zero; since any such field is a finite extension of $\R$ or $\Q_p$, it is enough to consider these two fields.

\begin{Thm}Let $\K$ denote $\R$ or $\Q_p$. Let $G$ be a locally compact group isomorphic to a closed subgroup of $\GL_n(\K)$. Let $N$ be a non-compact closed normal subgroup of $G$. Then $(G,N)$ has the relative Howe-Moore Property if and only if one of the following conditions holds.
\begin{itemize}
\item[(1)] $N\simeq\K^n$ ($n\ge 1$), and the representation of any open subgroup of $G$ by conjugation on $N$ is irreducible and non-trivial.
\item[(2)] $N\simeq S^+$, the subgroup generated by unipotent elements in $\mathcal{S}(\mathbf{K})$, where $\mathcal{S}$ is a simple algebraic $\K$-group.
\end{itemize}\label{main}
\end{Thm}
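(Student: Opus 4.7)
The plan is to prove the two implications separately, with the sufficiency of~(1) being the main analytic contribution and the rest reducing to more classical material.

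\textbf{Necessity.} Assuming $(G,N)$ has the relative Howe-Moore property, I would first establish a relative analog of Proposition \ref{perm2}: every closed subgroup $M$ of $N$ that is normal in $G$ is either compact or equal to $N$. Indeed, considering the quasi-regular representation of $G$ on $L^2(G/M)\ominus \mathbf{C}$, its restriction to $N$ has no invariant vector (if $N\not=M$), yet the elements of $M$ act trivially, so the coefficients cannot vanish at infinity on $M$ unless $M$ is compact. Using that $G\subseteq \GL_n(\K)$, the Zariski closure of $N$, together with its (unipotent) radical and derived group, provides a family of closed $G$-invariant subgroups of $N$. The structure theory of linear algebraic groups over local fields then forces a dichotomy: either $N$ is (after quotienting by a compact piece) abelian of vector type, giving case~(1) with irreducibility deduced from the fact that a $G$-invariant proper subspace would again contradict the relative Howe-Moore property; or the semisimple part of $N$ is nontrivial, and the structural analysis identifies $N$ with $S^+$ for a simple $\K$-group $\mathcal{S}$, yielding~(2).

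\textbf{Sufficiency, case (2).} Here $N=S^+$ is itself a Howe-Moore group by the theorem of Howe and Moore quoted in the introduction. For any unitary representation $\pi$ of $G$ without nonzero $N$-invariant vector, the restriction $\pi|_N$ still has no invariant vector, hence is a $C_0$-representation of $N$ by the absolute Howe-Moore theorem applied to $N$ itself.

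\textbf{Sufficiency, case (1).} This is the analytic core. Let $\pi$ be a unitary representation of $G$ with no nonzero $N$-invariant vector. By the SNAG theorem, $\pi|_N$ is encoded by a projection-valued measure $E$ on $\widehat{N}\simeq \K^n$, and the hypothesis means $E(\{0\})=0$. For $v\in\HH_\pi$, the coefficient $\langle\pi(x)v,v\rangle$ equals the Fourier transform $\widehat{\mu_v}(x)$ of the scalar positive measure $\mu_v$ supported on $\widehat{N}\setminus\{0\}$, and the aim is to prove $\widehat{\mu_v}(x)\to 0$ as $x\to\infty$ in $N$. The obstruction is that $\mu_v$ may have a singular part, for which the Riemann-Lebesgue lemma is unavailable. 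The remedy is to exploit covariance $E(g\cdot B)=\pi(g)E(B)\pi(g)^{-1}$ and to smooth $v$ by the action of compactly supported functions on $G$, reducing the problem to the behavior of oscillatory integrals
\[
\int_{\widehat{N}} \chi_\xi(x)\,\varphi(\xi)\,d\xi
\]
on orbits of $G$ in $\widehat{N}$. The hypothesis that the conjugation action of \emph{every} open subgroup of $G$ on $N$ is irreducible and nontrivial ensures that such orbits carry sufficient curvature (or, in the non-archimedean case, sufficient algebraic transversality) to apply oscillatory integral vanishing.

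\textbf{Main obstacle.} I expect the principal difficulty to lie in this last step, namely proving that $\widehat{\mu_v}(x)\to 0$ uniformly enough to handle all $v$ and all (possibly singular) $\mu_v$. In the real case this should be treated by a stationary-phase/van der Corput estimate along $G$-orbits in $\widehat{N}$, while in the $p$-adic case one must invoke Igusa-type bounds on oscillatory $p$-adic integrals; the two settings are why the abstract separately advertises ``results about the vanishing at infinity of oscillatory integrals''. Once this is achieved for a dense set of smoothed vectors, a uniform-boundedness argument extends the conclusion to all $v\in \HH_\pi$, completing case~(1).
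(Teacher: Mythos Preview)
Your global plan matches the paper's: sufficiency of~(2) via the classical Howe--Moore theorem, sufficiency of~(1) via the Mackey machine plus oscillatory-integral decay, and necessity via structural constraints deduced from Proposition~\ref{perm2}. The gap is in your reduction step for case~(1). You propose to ``smooth $v$ by the action of compactly supported functions on $G$'' and thereby reduce to oscillatory integrals \emph{on $G$-orbits} in $\widehat{N}$. But convolving $v$ by functions on $G$ does not localize the spectral measure $\mu_v$ to a single orbit: if $v'=\int f(g)\pi(g)v\,dg$ then $\mu_{v'}$ is a $G$-averaged version of $\mu_v$ that can still be spread over many orbits and still be singular with respect to Lebesgue measure, so there is no orbit integral to which to apply stationary phase. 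The paper's route is different and is what makes the argument work: first reduce to \emph{irreducible} $\pi$ via direct integral decomposition (Corollary~\ref{irredcor}); for irreducible $\pi$ the Mackey measure is $G$-ergodic; one then proves that nonzero $G$-orbits in $\widehat{V}$ are locally closed and not locally contained in any affine hyperplane (Lemma~\ref{GHV}, Proposition~\ref{orblocc} in the real case, compactness of open subgroups in the $p$-adic case); ergodicity together with local closedness forces the measure to live on a single orbit, and only then is one computing the Fourier transform of an $L^1$ density against surface measure on an analytic submanifold of finite type, where the oscillatory bounds from \cite{Stein} and \cite{Clu} apply.

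On necessity, your sketch is broadly right but substantially underestimates the work. The paper does not argue directly with the Zariski closure of $N$; it first deduces from Proposition~\ref{perm2} that every proper closed \emph{characteristic} subgroup of $N$ is compact, then classifies all analytic real or $p$-adic groups with that property (Propositions~\ref{LIEcar} and~\ref{charanal}), and finally eliminates the spurious cases (e.g.\ $N$ with compact open radical and non-finitely-generated discrete quotient, or $N$ a product of several simple factors) using Theorem~\ref{structurerelHM} and Proposition~\ref{dino}. Also note that in case~(1) you must obtain irreducibility under \emph{every open} subgroup of $G$, not just $G$ itself; this requires first passing to an arbitrary open subgroup via Proposition~\ref{perm1}(ii) before invoking Proposition~\ref{perm2}.
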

Note that if $\K=\R$, (1) amounts to say that the connected component of identity $G_0$ acts irreducibly on $N$.

Our proof of the sufficiency in Theorem \ref{main} is based on the Mackey machine for semidirect products and vanishing at infinity of Fourier transforms of singular measures on $\K^n$.

A motivation for the paper was the study of Property (BP): as B.~Bekka pointed out to us, if $(G,N)$ has the relative Howe-Moore Property, then it has the relative Property (BP), i.e. for every affine isometric action of $G$ on a Hilbert space, its restriction to $N$ is either proper or has a fixed point. We plan to come back to property (BP) in a subsequent paper.

\medskip

The paper is organized as follows: Section \ref{sec:2} discusses various characterizations of the Howe-Moore property (and its relative version), Section \ref{sec:3} treats structural consequences of these properties. Section \ref{sec:4} contains the main argument that we need to prove the relative Howe-Moore Property, in the essential case of a semidirect product with an abelian group. Sections \ref{sec:5} and \ref{sec:6} mainly concern the structure of Lie groups and $p$-adic Lie groups, giving restriction on the possible pairs with the relative Howe-Moore Property, resulting in Theorem \ref{main}. On our way, we are led to completely characterize analytic real (in Section 5) and $p$-adic groups (in Section 6) such that every proper, closed characteristic subgroup is compact.

{\bf Acknowledgements}: %We thank Y. Benoist and E. Breuillard for useful conversations, and
We thank J. Ludwig for pointing out the reference \cite{Stein} on decay properties of Fourier transforms on singular measures on $\R^n$. Special thanks are due to P.-E. Caprace for considerable input at various stages, in particular for suggesting Proposition \ref{loccompHM}.

\section{Characterizations of Howe-Moore property}\label{sec:2}

\subsection{Ergodic theory}

The aim of this section is to prove Proposition \ref{Prop:mixing}, and generalize it to the relative situation.

Recall that a locally compact group $G$ is {\it maximally almost periodic} (resp. {\it minimally almost periodic}) if finite-dimensional unitary representations of $G$ separate points (resp. every finite-dimensional unitary representation of $G$ is trivial).

\begin{Lem}\label{finitedim} Let $N$ be a closed, non-compact normal subgroup in the locally compact group $G$, such that the pair $(G,N)$ has the relative Howe-Moore property.
\begin{itemize}
\item[a)] Let $\pi$ be a unitary representation of $G$ on the Hilbert space ${\mathcal H}$. Any finite-dimensional $\pi(N)$-invariant subspace of ${\mathcal H}$ consists of $\pi(N)$-fixed vectors. %In particular every finite-dimensional unitary representation of $G$ factors through $G/N$.
\item[b)] Every continuous homomorphism from $G$ to a maximally almost periodic group, factors through $G/N$.
\item[c)] $N$ is contained in the closed commutator subgroup $\overline{[G,G]}$.
\end{itemize}
\end{Lem}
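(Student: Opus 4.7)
The plan is to treat the three statements in order, since (a) is the substantive step and (b), (c) are then formal consequences of it combined with standard facts.

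For (a), fix $\pi$ on $\mathcal{H}$ and a finite-dimensional $\pi(N)$-invariant subspace $V$. Since $N$ is normal in $G$, the closed subspace $\mathcal{H}^N$ of $N$-fixed vectors is $G$-invariant, hence so is its orthogonal complement $\mathcal{H}_0$, and the corresponding orthogonal projection $p:\mathcal{H}\to\mathcal{H}_0$ commutes with $\pi(G)$. Then $p(V)\subset\mathcal{H}_0$ is a finite-dimensional $\pi(N)$-invariant subspace, and my aim is to show $p(V)=0$; this immediately yields $V\subset\mathcal{H}^N$. For the vanishing, I would apply the relative Howe-Moore property to the subrepresentation $\pi|_{\mathcal{H}_0}$: since $\mathcal{H}_0$ contains no non-zero $N$-fixed vector, the restriction of this subrepresentation to $N$ is $C_0$. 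On the other hand, $\pi(N)$ acts on the finite-dimensional space $p(V)$ through the compact unitary group $U(p(V))$. Picking a net $n_i\to\infty$ in $N$ (available since $N$ is non-compact) and extracting a subnet along which $\pi(n_i)|_{p(V)}$ converges to some $k\in U(p(V))$, the $C_0$ property gives $\langle kv,w\rangle=0$ for all $v,w\in p(V)$, hence $k=0$; this is incompatible with unitarity unless $p(V)=0$.

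Statement (b) then follows from (a) by contradiction. Let $\varphi:G\to Q$ be continuous with $Q$ maximally almost periodic, and suppose some $n\in N$ satisfies $\varphi(n)\neq e$. By maximal almost periodicity, choose a finite-dimensional unitary representation $\sigma$ of $Q$ with $\sigma(\varphi(n))\neq\Id$, and consider the finite-dimensional unitary representation $\sigma\circ\varphi$ of $G$ on its representation space $V_\sigma$. The whole of $V_\sigma$ is $(\sigma\circ\varphi)(N)$-invariant, so by (a) every vector of $V_\sigma$ is $(\sigma\circ\varphi)(N)$-fixed, i.e.\ $\sigma(\varphi(n))=\Id$, a contradiction. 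Hence $\varphi$ kills $N$ and factors through $G/N$.

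For (c), I would apply (b) to the canonical projection $G\to G/\overline{[G,G]}$, whose target is locally compact abelian and thus maximally almost periodic by Pontryagin duality (the characters of a locally compact abelian group separate its points). The conclusion of (b) then forces $N$ to lie in the kernel of this map, namely $\overline{[G,G]}$.

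The main obstacle is the compactness/convergence argument at the heart of (a), which bridges the abstract $C_0$ hypothesis coming from the relative Howe-Moore property and the concrete finite-dimensional situation via the compactness of $U(p(V))$; once that step is in place, parts (b) and (c) are routine consequences of the definitions of maximally almost periodic groups and of Pontryagin duality.
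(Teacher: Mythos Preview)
Your proof is correct and follows essentially the same approach as the paper: decompose $\mathcal{H}=\mathcal{H}^N\oplus\mathcal{H}_0$, use the relative Howe-Moore property to make the restriction to $\mathcal{H}_0$ a $C_0$-representation of $N$, and then argue that a $C_0$-representation cannot have a non-zero finite-dimensional subrepresentation; parts (b) and (c) are derived identically. The only difference is in that last step: the paper uses the determinant trick (since $|\det\sigma(n)|=1$ and $\det$ is a polynomial in the matrix coefficients, not all coefficients can vanish at infinity), whereas you use compactness of $U(p(V))$ to extract a limit operator that must be simultaneously unitary and zero --- both arguments are standard and equally short.
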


\begin{proof} $(a)$ Let ${\mathcal H}^N$ be the space of $\pi(N)$-fixed vectors, and let ${\mathcal H}^\perp$ be the orthogonal subspace. As $N$ is normal in $G$, both subspaces are $\pi(G)$-invariant, and we denote by $\pi^\perp$ the restriction of $\pi$ to ${\mathcal H}^\perp$. So it is enough to prove that $\pi^\perp\vert_N$ has no finite-dimensional sub-representation. By the relative Howe-Moore property, $\pi^\perp\vert_N$ is a $C_0$-representation of $N$, while a finite-dimensional unitary representation is never $C_0$ (recall the easy argument: if $\sigma$ is finite-dimensional unitary, then $|\det\sigma(n)|=1$ for every $n\in N$; but $\det$ is a homogeneous polynomial in coefficients of $\sigma$, so not all coefficients are $C_0$). %The second statement is an immediate consequence of the first one.

$(b)$ It is enough to observe that every finite-dimensional unitary representation of $G$ is trivial on $N$, which follows from $(a)$.

$(c)$ Follows from $(b)$, in view of the fact that $G/\overline{[G,G]}$ is abelian, hence maximally almost periodic.
\end{proof}
%\medskip

To state the ergodic theoretic characterization of the relative Howe-Moore property, we recall that if a locally compact group $G$ acts in a measure-preserving way on a probability space $(X,{\mathcal B},\mu)$, and $H$ is a closed, non-compact subgroup of $G$, the action is said to be {\it $H$-ergodic} if its restriction to $H$ is ergodic, i.e. for every $H$-invariant $A\in{\mathcal B}$, one has either $\mu(A)=0$ or $\mu(A)=1$; similarly, the action is {\it $H$-mixing} if its restriction to $H$ is mixing, i.e. $\lim_{h\rightarrow\infty}\mu(hA\cap B)=\mu(A)\mu(B)$ for every $A,B\in{\mathcal B}$. Proposition \ref{Prop:mixing} is then an immediate consequence of the more general:

\begin{Prop}\label{Prop:relmix} Let $H$ be a closed, non-compact subgroup of the locally compact group $G$. Consider the following properties:
\begin{enumerate}
\item[a)] The pair $(G,H)$ has the relative Howe-Moore property;
\item[b)] Every measure-preserving action of $G$ on a probability space which is $H$-ergodic, is necessarily $H$-mixing.
\end{enumerate}
Then $(a)\Rightarrow(b)$. The converse holds if $G$ is second countable and either $H$ is minimally almost periodic or $H$ is normal in $G$.
\end{Prop}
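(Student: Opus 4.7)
The plan is to prove the two implications separately.

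For $(a) \Rightarrow (b)$: given a measure-preserving action of $G$ on a probability space $(X, \mathcal{B}, \mu)$, the natural approach is to look at the Koopman representation $\pi$ of $G$ on $L^2_0 := L^2(X, \mu) \ominus \C\mathbf{1}$. Under $H$-ergodicity, $\pi|_H$ has no non-zero invariant vector, so by (a), $\pi|_H$ is $C_0$; evaluating the vanishing of $\langle \pi(h) f, g \rangle$ on $f = \mathbf{1}_A - \mu(A)$, $g = \mathbf{1}_B - \mu(B)$ yields $\mu(hA \cap B) - \mu(A)\mu(B) \to 0$, which is exactly $H$-mixing. This direction needs no additional hypothesis.

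For $(b) \Rightarrow (a)$, the main tool will be the Gaussian action construction, available for any second countable locally compact $G$: to a continuous orthogonal representation $\rho$ of $G$ on a real separable Hilbert space it associates a measure-preserving action $\sigma_\rho$, which is $H$-mixing if and only if $\rho|_H$ is $C_0$, and is $H$-ergodic if and only if $\rho|_H$ has no non-zero finite-dimensional $H$-subrepresentation. Given a unitary representation $\pi$ of $G$ without $H$-fixed vectors, the plan is to apply this to the real form of $\rho = \pi \oplus \bar\pi$; since $\rho|_H$ is $C_0$ if and only if $\pi|_H$ is, the task reduces to verifying $H$-ergodicity of $\sigma_\rho$, after which (b) forces $H$-mixing. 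Under the minimal almost periodicity hypothesis, every non-zero finite-dimensional unitary representation of $H$ is trivial and thus has invariant vectors; the absence of $H$-fixed vectors in $\pi$ therefore rules out non-zero finite-dimensional subrepresentations of $\pi|_H$, and the Gaussian argument closes the case immediately.

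The normal case is the delicate one, because $\pi|_H$ may still carry non-trivial finite-dimensional subrepresentations (e.g.\ characters of $H$ extending to characters of $G$). The plan is to split $\mathcal{H} = \mathcal{H}_{\mathrm{ap}} \oplus \mathcal{H}_{\mathrm{wm}}$, with $\mathcal{H}_{\mathrm{ap}}$ the closed span of the finite-dimensional $\pi(H)$-invariant subspaces; normality renders both summands $G$-invariant, giving $\pi = \pi_{\mathrm{ap}} \oplus \pi_{\mathrm{wm}}$. If $\pi_{\mathrm{wm}}|_H$ is not $C_0$, the Gaussian argument applied to $\pi_{\mathrm{wm}} \oplus \bar\pi_{\mathrm{wm}}$ goes through directly, since by construction $\pi_{\mathrm{wm}}|_H$ has no finite-dimensional sub-representation. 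The main obstacle will be the complementary sub-case, in which $\pi_{\mathrm{wm}}|_H$ is $C_0$ but $\pi_{\mathrm{ap}}|_H$ is not: the idea is to extract a non-trivial finite-dimensional irreducible $\sigma: H \to U(V)$ inside $\pi_{\mathrm{ap}}$ and exploit the conjugation action of $G$ on $\overline{\sigma(H)} \subseteq U(V)$ (granted by normality), together with a Mackey-type extension of $\sigma$, to build a measure-preserving $G$-action on the compact group $(\overline{\sigma(H)}, \mathrm{Haar})$ whose restriction to $H$ is left translation by $\sigma(H)$. Such an action is $H$-ergodic (dense orbit) but isometric, hence never $H$-mixing, contradicting (b).
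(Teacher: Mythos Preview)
Your treatment of $(a)\Rightarrow(b)$ and of the minimally almost periodic case of $(b)\Rightarrow(a)$ is correct and coincides with the paper's (Koopman representation for the first, Gaussian construction for the second). Your splitting $\pi=\pi_{\mathrm{ap}}\oplus\pi_{\mathrm{wm}}$ in the normal case is legitimate, and the Gaussian argument does dispose of $\pi_{\mathrm{wm}}$.

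The gap is in your treatment of $\pi_{\mathrm{ap}}$. The ``conjugation action of $G$ on $\overline{\sigma(H)}$'' that you invoke is not well-defined: normality of $H$ gives a $G$-action on $H$, but the assignment $\sigma(h)\mapsto\sigma(ghg^{-1})$ descends to $\sigma(H)$ only when $\ker\sigma$ is normal in $G$, which is not granted. (For instance, with $G=\SL_2(\R)\ltimes\R^2$ and $H=\R^2$, a nontrivial character $\sigma$ of $H$ has kernel equal to an affine copy of $\xi^\perp\oplus\Z$ for some $\xi\neq 0$, never stable under the $\SL_2(\R)$-conjugation action.) Mackey theory does not repair this: it extends $\sigma$ only projectively to the inertia subgroup $G_\sigma\supset H$, and the induced $G$-space $G\times_{G_\sigma}\overline{\sigma(H)}$ is generally not a probability space; even when $[G:G_\sigma]<\infty$, the action fails to be $H$-ergodic because $H\subset G_\sigma$ preserves each fibre. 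So you have not actually produced an $H$-ergodic, non-$H$-mixing $G$-action, and the contradiction does not close. The paper's route in the normal case is different: it argues directly that $\pi_{\R}|_H$ admits \emph{no} finite-dimensional subrepresentation, by appealing to Lemma~\ref{finitedim}(a). You should note, however, that Lemma~\ref{finitedim}(a) is stated under the hypothesis that $(G,H)$ already has the relative Howe-Moore property, so the paper's invocation at this point is itself not self-contained as written; completing the normal case cleanly does require an additional idea.
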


\begin{proof} If $G$ acts in a measure preserving way on a probability space $(X,{\mathcal B},\mu)$, by Koopman's theorem (see e.g. \cite{BeMa}, Theorem 2.1) the action is $H$-ergodic if and only if the natural $G$-representation $\pi$ on $L^2_0(X,\mu)=:\{f\in L^2(X,\mu):\int_X f\,d\mu=0\}$ has no non-zero $H$-invariant vector; and the action is $H$-mixing if and only if $\pi|_H$ is a $C_0$-representation (see \cite{BeMa}, Theorem 2.9); this proves $(a)\Rightarrow (b)$.

To prove the converse, assume that $G$ is second countable, that either $H$ is minimally almost periodic or $H\vartriangleleft G$, and that every $H$-ergodic action of $G$ is $H$-mixing. Let $\pi$ be a unitary representation of $G$ on a Hilbert space ${\mathcal H}$, without non-zero $H$-fixed vector. We must show that coefficients of $\pi|_H$ are $C_0$. Clearly we may assume that ${\mathcal H}$ is separable. Let ${\mathcal H}_{\R}$ be the real Hilbert space underlying ${\mathcal H}$, and let $\pi_{\R}$ be the representation $\pi$ viewed as an orthogonal representation on ${\mathcal H}_{\R}$. It will be enough to check that coefficients of $\pi_{\R}|_H$ are $C_0$. Since $G$ is second countable, there exists a probability space $(X,{\mathcal B},\mu)$ endowed with a measure-preserving action of $G$, such that the associated orthogonal representation $\sigma$ of $G$ on $L^2_{\R}(X,\mu)$ is equivalent to the direct sum $\bigoplus_{k=0}^{\infty}S^k\pi_{\R}$, where $S^k\pi_{\R}$ is the $k$-th symmetric tensor power of $\pi_{\R}$ (see \cite{BHV}, Corollary A.7.15).
\medskip

{\bf Claim:} The action of $G$ on $X$ is $H$-ergodic.
\medskip

The result then follows, since the assumption implies that the $G$-action on $X$ is $H$-mixing, meaning that the restriction $\tau$ of $\sigma|_H$ to the orthogonal of constants in $L^2_{\R}(X,\mu)$ is a $C_0$-representation of $H$, and since $\pi_{\R}|_H$ is a subrepresentation of $\tau$.

To prove the claim, by Koopman's theorem it is enough to check that, for every $k\geq 1$, the symmetric tensor power $S^k\pi_{\R}$ has no non-zero $H$-fixed vector. Suppose by contradiction that it does, for some $k\geq 1$. View $S^k\pi_{\R}$ as a subrepresentation of the tensor power $\pi_{\R}^{\otimes k}$. It is a standard fact (see e.g. \cite{BHV}, Proposition A.1.12) that, if the representation $\pi_{\R}^{\otimes k}$ has a non-zero $H$-fixed vector, then $\pi_{\R}|_H$ contains a finite-dimensional subrepresentation. As $H$ is either minimally almost periodic or normal in $G$, we deduce (using Lemma \ref{finitedim}(a) in the latter case) that $\pi_{\R}$ has non-zero $H$-fixed vectors, contradicting our assumption on $\pi$.
\end{proof}

\subsection{Irreducible representations and positive definite functions}

\begin{Prop}\label{irredprop} Let $H$ be a closed subgroup of the second countable, locally
compact group $G$. Assume that, for every irreducible representation
$\sigma$ of $G$ without non-zero $H$-fixed vectors, the restriction
$\sigma|_H$ is a $C_0$-representation of $H$. Then $(G,H)$ has the
Howe-Moore property.
\end{Prop}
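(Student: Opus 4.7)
The proof proceeds by decomposing an arbitrary unitary representation of $G$ as a direct integral of irreducibles and then applying the hypothesis term by term. Fix a unitary representation $\pi$ of $G$ on $\mathcal{H}_\pi$ with no non-zero $H$-fixed vector; the task is to show that $\pi|_H$ is a $C_0$-representation of $H$. A unitary representation is an orthogonal sum of separable cyclic subrepresentations, and both the absence of $H$-fixed vectors and the $C_0$ property pass to orthogonal sums, so I may assume $\mathcal{H}_\pi$ is separable.

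Since $G$ is second countable, the group C*-algebra $C^*(G)$ is separable, and Dixmier's theory of direct-integral decompositions for representations of separable C*-algebras provides a decomposition
$$\pi \;=\; \int^\oplus_T \sigma_t\, d\mu(t)$$
over a standard Borel probability space $(T,\mu)$, in which each $\sigma_t$ is an irreducible unitary representation of $G$. The first key step is to verify that the $H$-fixed subspace respects this decomposition. For a vector $v = \int^\oplus v_t\, d\mu(t)$ and a fixed $h\in H$, the relation $\pi(h)v=v$ is equivalent to $\sigma_t(h)v_t = v_t$ for $\mu$-a.e.\ $t$; intersecting the corresponding conull sets over a countable dense subset of $H$, available because $H$ is second countable, shows that $v\in \mathcal{H}_\pi^{H}$ iff $v_t\in \mathcal{H}_t^{H}$ for $\mu$-a.e.\ $t$, i.e. $\mathcal{H}_\pi^{H}=\int^\oplus \mathcal{H}_t^{H}\, d\mu(t)$. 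The assumption $\mathcal{H}_\pi^{H}=0$ therefore forces $\mathcal{H}_t^{H}=0$ for $\mu$-a.e.\ $t$, and the hypothesis of the proposition gives that $\sigma_t|_H$ is a $C_0$-representation for $\mu$-a.e.\ $t$.

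It remains to assemble the coefficients. For $v,w\in\mathcal{H}_\pi$ with fibre decompositions $v_t$ and $w_t$,
$$\langle \pi(h)v, w\rangle \;=\; \int_T \langle \sigma_t(h)v_t, w_t\rangle\, d\mu(t),$$
and the integrand is bounded in absolute value by $\|v_t\|\|w_t\|$, which is $\mu$-integrable by Cauchy--Schwarz together with the identities $\|v\|^2 = \int \|v_t\|^2\, d\mu$ and $\|w\|^2 = \int \|w_t\|^2\, d\mu$. For any sequence $(h_n)$ leaving every compact subset of $H$, the integrand tends pointwise to $0$ on the conull set where $\sigma_t|_H$ is $C_0$, so dominated convergence yields $\langle \pi(h_n)v, w\rangle \to 0$. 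Since $H$ is second countable, this sequential criterion is equivalent to vanishing at infinity, so $\pi|_H$ is $C_0$, establishing the relative Howe-Moore Property for $(G,H)$.

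The main technical obstacle is justifying the direct-integral decomposition into irreducibles for a group not assumed to be type I; this rests on separability of $C^*(G)$ and on standard results from Dixmier's C*-algebra theory. The subsequent compatibility of the $H$-fixed subspace with the decomposition is a measurable-selection argument that uses second countability of $H$ essentially, and once both are in hand, the passage back to a coefficient of $\pi$ is a routine application of dominated convergence.
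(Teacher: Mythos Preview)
Your proof is correct and follows essentially the same route as the paper's: decompose $\pi$ as a direct integral of irreducibles, deduce that almost every fibre has trivial $H$-fixed space and hence is $C_0$ on $H$, then reassemble. Where the paper simply cites Zimmer's Proposition~2.3.2 for the two technical steps (decomposability of the $H$-fixed subspace and the passage from $C_0$ fibres back to a $C_0$ direct integral), you spell these out via second countability of $H$ and dominated convergence, and you add the preliminary reduction to separable $\mathcal{H}_\pi$ that the paper leaves implicit.
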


\begin{proof} Let $\pi$ be a representation of $G$, without non-zero
$H$-fixed vector. We have to prove that $\pi|_H$ is a
$C_0$-representation of $H$. Since $G$ is second countable, by
disintegration theory (see \cite{Dix}) there exists a
$\sigma$-finite measure space $(X,\mu)$ and a measurable field $x\mapsto\sigma_x$ of irreducible representations of $G$, such
that $\pi$ is a direct integral:
$$\pi=\int_X^{\oplus}\sigma_x\,d\mu(x).$$
By assumption on $\pi$, and Proposition 2.3.2(ii) in \cite{Zim}, the
set of $x\in X$ such that $\sigma_x$ has non-zero $H$-fixed vectors,
has measure zero. So, $\mu$-almost everywhere, the restriction
$\sigma|_H$ is a $C_0$-representation of $H$, and Proposition
2.3.2(i) in \cite{Zim} applies to give the result. \end{proof}
%\medskip

When $H$ is normal in $G$, Proposition \ref{irredprop} can be
restated.

\begin{Cor}\label{irredcor} Let $H$ be a closed, normal subgroup of the second countable,
locally compact group $G$. The pair $(G,H)$ has the Howe-Moore
property if and only if for every \textit{irreducible}
representation $\sigma$ of $G$ which is non-trivial on $H$, the
restriction $\sigma|_H$ is a $C_0$-representation of $H$.
%\hfill$\square$
\end{Cor}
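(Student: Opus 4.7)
The plan is to derive the corollary from Proposition \ref{irredprop} by means of a standard dichotomy for irreducible representations restricted to a normal subgroup. The key observation is that, when $H$ is normal in $G$, for any unitary representation $\sigma$ of $G$ on a Hilbert space $\HH$, the subspace $\HH^H$ of $H$-fixed vectors is automatically $G$-invariant: if $v\in\HH^H$ and $g\in G$, then for every $h\in H$ one has $\sigma(h)\sigma(g)v=\sigma(g)\sigma(g^{-1}hg)v=\sigma(g)v$, using $g^{-1}hg\in H$. Consequently, when $\sigma$ is irreducible, $\HH^H$ equals $\{0\}$ or $\HH$, and therefore the conditions ``$\sigma$ has no non-zero $H$-fixed vector'' and ``$\sigma|_H$ is non-trivial'' are equivalent.

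With this dictionary in hand, both implications become formal. For $(\Leftarrow)$, I would verify the hypothesis of Proposition \ref{irredprop}: given an irreducible representation $\sigma$ of $G$ without non-zero $H$-fixed vectors, the observation says $\sigma$ is non-trivial on $H$, so by assumption $\sigma|_H$ is a $C_0$-representation; Proposition \ref{irredprop} then delivers the relative Howe-Moore property for $(G,H)$. For $(\Rightarrow)$, let $\sigma$ be an irreducible representation of $G$ non-trivial on $H$; the observation gives that $\sigma$ has no non-zero $H$-fixed vector, and the very definition of the relative Howe-Moore property forces $\sigma|_H$ to be a $C_0$-representation.

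There is essentially no technical obstacle: the corollary is a clean repackaging of Proposition \ref{irredprop}, with normality supplying the only missing ingredient, namely the equivalence between being trivial on $H$ and having all vectors $H$-fixed. The second-countability hypothesis is inherited from Proposition \ref{irredprop} (where it enters through the direct-integral decomposition) and is used only in the $(\Leftarrow)$ direction.
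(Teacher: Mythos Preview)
Your proposal is correct and follows essentially the same approach as the paper: the key observation that $\HH^H$ is $G$-invariant when $H$ is normal, forcing the dichotomy for irreducible $\sigma$, and then invoking Proposition~\ref{irredprop}. Your write-up is in fact more complete than the paper's, which only spells out the observation and the $(\Leftarrow)$ direction, leaving $(\Rightarrow)$ implicit.
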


\begin{proof} Let $(\pi,\HH)$ be an irreducible representation of
$G$. Since $H$ is a normal subgroup of $G$, the space of
$H$-invariant vectors of $\HH$ is globally $G$-invariant. So if this
space is not zero, it has to be all $\HH$, which means that
$\pi_{|H}$ is trivial. Thus Proposition \ref{irredprop} applies.
\end{proof}
%\medskip

The relative Howe-Moore property can also be characterized in terms of positive definite functions.

\begin{Prop}\label{GNS} Let $H$ be a closed subgroup of the locally compact compact group $G$. The following are equivalent:
\begin{enumerate}
\item[i)] The pair $(G,H)$ has the relative Howe-Moore property;
\item[ii)] For every non-zero positive definite function $\phi$ on $G$, either there exists $c>0$ and a positive definite function $\psi$ on $H$ such that $\phi\vert_H=c+\psi$, or for every $g,g'\in G$ the function $h\mapsto \phi(ghg')$ is in $C_0(H)$.
\end{enumerate}
\end{Prop}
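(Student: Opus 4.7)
The plan is to use the GNS correspondence between non-zero positive definite functions on $G$ and cyclic unitary $G$-representations. For a positive definite $\phi$ on $G$, write $(\pi_\phi, \HH_\phi, \xi_\phi)$ for its GNS triple, so that $\phi(g) = \langle \pi_\phi(g)\xi_\phi, \xi_\phi\rangle$ and $\xi_\phi$ is cyclic. For the direction $(i)\Rightarrow(ii)$, given a non-zero such $\phi$ I apply the relative Howe-Moore hypothesis to $\pi_\phi$. If $\pi_\phi|_H$ is a $C_0$-representation, then the identity
\[
\phi(ghg') = \langle \pi_\phi(h)\pi_\phi(g')\xi_\phi, \pi_\phi(g^{-1})\xi_\phi\rangle
\]
exhibits $h\mapsto \phi(ghg')$ as a matrix coefficient of $\pi_\phi|_H$, which lies in $C_0(H)$ for every $g,g'\in G$, giving the second alternative of (ii).

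In the remaining case $\HH_\phi^H\neq 0$, let $P$ be the orthogonal projection onto $\HH_\phi^H$ and decompose $\xi_\phi=\xi_1+\xi_2$ with $\xi_1=P\xi_\phi$. Since $P$ commutes with $\pi_\phi(h)$ for $h\in H$ and $\pi_\phi(h)\xi_1=\xi_1$, the cross terms $\langle \pi_\phi(h)\xi_1,\xi_2\rangle$ and $\langle \pi_\phi(h)\xi_2,\xi_1\rangle$ vanish by orthogonality, yielding
\[
\phi(h)=\|\xi_1\|^2+\langle \pi_\phi(h)\xi_2,\xi_2\rangle\qquad (h\in H),
\]
so $\phi|_H=c+\psi$ with $c=\|\xi_1\|^2$ and $\psi$ positive definite on $H$. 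The main obstacle of this direction is to verify $c>0$: if $\xi_1=0$, then $\xi_\phi$ lies in $(\HH_\phi^H)^\perp$, and in the normal-subgroup setting relevant to the rest of the paper this subspace is $\pi_\phi(G)$-invariant, so cyclicity of $\xi_\phi$ forces $\HH_\phi\subseteq (\HH_\phi^H)^\perp$, contradicting $\HH_\phi^H\neq 0$ and establishing the first alternative of (ii).

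For the converse $(ii)\Rightarrow(i)$, let $\pi$ be a unitary $G$-representation with no non-zero $H$-invariant vector; I show $\pi|_H$ is a $C_0$-representation. For each non-zero $\xi\in\HH$, apply (ii) to the positive definite function $\phi_\xi(g)=\langle \pi(g)\xi,\xi\rangle$. The first alternative $\phi_\xi|_H=c+\psi$ with $c>0$ is incompatible with the hypothesis on $\pi$: the Radon-Nikodym theorem for positive definite functions, applied inside the GNS of $\phi_\xi|_H$ (canonically realised as the cyclic $H$-subrepresentation of $\xi$ inside $\pi|_H$), produces an $H$-invariant vector of norm $\sqrt{c}$, which then lies in $\pi$. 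Therefore the second alternative must hold for $\phi_\xi$, and specialising $g=g'=e$ gives $\phi_\xi\in C_0(H)$. By polarization this extends to every matrix coefficient of $\pi|_H$, completing the argument.
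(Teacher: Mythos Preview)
Your approach via the GNS construction is the same as the paper's, and your argument for $(ii)\Rightarrow(i)$ matches theirs (the polarization step you include is left implicit in the paper). For $(i)\Rightarrow(ii)$ you unpack explicitly what the paper obtains by citing \cite{BHV}, Proposition~C.5.1, and in doing so you correctly isolate the one nontrivial point: verifying $c=\|P\xi_\phi\|^2>0$ once one knows $\HH_\phi^H\neq 0$. However, you resolve this only under the extra hypothesis that $H$ is normal in $G$ (so that $(\HH_\phi^H)^\perp$ is $\pi_\phi(G)$-invariant and cyclicity of $\xi_\phi$ can be invoked), whereas the proposition is stated for an arbitrary closed subgroup $H$. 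As written, this is a gap relative to the stated generality.

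It is worth noting that the reference the paper invokes, \cite{BHV} Proposition~C.5.1, literally yields the equivalence between the first alternative of (ii) and the existence of a nonzero $H$-fixed vector in the GNS representation of $\phi|_H$---that is, in the $H$-cyclic span of $\xi_\phi$ inside $\HH_\phi$---which is exactly the condition $P\xi_\phi\neq 0$. Thus the passage from ``$\HH_\phi^H\neq 0$'' to ``$P\xi_\phi\neq 0$'' is precisely the step you are flagging; your normality argument is the natural way to supply it, and it covers all the applications made later in the paper.
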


\begin{proof} For a positive definite function $\phi$ on $G$, let $\pi_\phi$ be the representation associated to $\phi$ by the GNS construction. The first condition in (ii) means that $\pi_\phi\vert_H$ has non-zero $H$-fixed vectors (see e.g. Proposition C.5.1 in \cite{BHV}); while the second condition in (ii) means that $\pi_\phi\vert_H$ is a $C_0$-representation (taking into account the fact that coefficients of $\pi_\phi$ are uniform limits of sums $g\rightarrow \sum_{i,j=1}^n c_i\overline{c_j}\phi(g_j^{-1}gg_i)$, see e.g. Exercise C.6.3 in \cite{BHV}). This already shows that $(i)\Rightarrow(ii)$. For the converse, let $\pi$ be a representation of $G$, without non-zero $H$-fixed vector, and let $\xi$ be a vector in the Hilbert space of $\pi$. Then $\phi(g):=\langle\pi(g)\xi|\xi\rangle$ is a positive definite function on $G$, and $\phi\vert_H$ is $C_0$ by $(ii)$.
\end{proof}

\subsection{Permanence properties}

\begin{Prop}\label{perm1} Let $L\subset H\subset G$ be closed subgroups of the locally compact group $G$.
\begin{enumerate}
\item[i)] If the pair $(H,L)$ has the relative Howe-Moore property, then so does the pair $(G,L)$.
\item[ii)] If the pair $(G,L)$ has the relative Howe-Moore property and $H$ is open in $G$, then $(H,L)$ has the relative Howe-Moore property.
\item[iii)] If the pair $(G,L)$ has the relative Howe-Moore-property, and there exists a closed normal subgroup $N\vartriangleleft G$ such that $G\simeq N\rtimes H$, then $(H,L)$ has the relative Howe-Moore-property.
\end{enumerate}
\end{Prop}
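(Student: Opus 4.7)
The plan is to treat the three parts separately: (i) and (iii) reduce to standard manipulations (restriction and pullback of representations), while (ii) exploits the positive definite function characterization of Proposition \ref{GNS}.

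For (i), given a unitary representation $\pi$ of $G$ with no non-zero $L$-fixed vector, its restriction $\pi|_H$ is a representation of $H$ with the same $L$-action, hence with no non-zero $L$-fixed vector; the $(H,L)$ relative Howe-Moore property then yields that $\pi|_L=(\pi|_H)|_L$ is a $C_0$-representation of $L$. For (iii), the semidirect decomposition provides a continuous surjective homomorphism $p\colon G\to H$ with kernel $N$, which restricts to the identity on $H$; I pull back any representation $\pi$ of $H$ without non-zero $L$-fixed vector to $\tilde\pi=\pi\circ p$ on $G$. Since $L\subset H$, one has $\tilde\pi(\ell)=\pi(\ell)$ for $\ell\in L$, so $\tilde\pi$ has exactly the same $L$-fixed vectors and the same restriction to $L$ as $\pi$; applying the $(G,L)$ relative Howe-Moore property to $\tilde\pi$ then forces $\pi|_L$ to be $C_0$.

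The main point is (ii), which I would prove by verifying the positive definite characterization of Proposition \ref{GNS}(ii) for the pair $(H,L)$ directly. The crucial ingredient is the classical fact that, when $H$ is open in $G$, extending any positive definite function $\varphi$ on $H$ by zero outside $H$ produces a positive definite function $\tilde\varphi$ on $G$; this follows because in the defining inequality $\sum_{i,j} c_i\overline{c_j}\,\tilde\varphi(g_j^{-1}g_i)\ge 0$ one has $\tilde\varphi(g_j^{-1}g_i)=0$ whenever $g_i,g_j$ lie in different cosets, so the sum splits into blocks indexed by $H$-cosets, each non-negative by positive definiteness of $\varphi$ on $H$. Given a non-zero positive definite $\varphi$ on $H$, I form the non-zero extension $\tilde\varphi$ and apply Proposition \ref{GNS}(ii) for $(G,L)$. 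Since $L\subset H$, the restriction $\tilde\varphi|_L$ coincides with $\varphi|_L$, so the first alternative $\tilde\varphi|_L=c+\psi$ translates verbatim into $\varphi|_L=c+\psi$. In the second alternative, the map $\ell\mapsto\tilde\varphi(g\ell g')$ is in $C_0(L)$ for all $g,g'\in G$; restricting to $g,g'\in H$ and using that $g\ell g'\in H$ for $\ell\in L$, this becomes the requirement that $\ell\mapsto\varphi(g\ell g')$ be in $C_0(L)$ for all $g,g'\in H$. In either case $\varphi$ satisfies Proposition \ref{GNS}(ii) for $(H,L)$, and the equivalence there yields the relative Howe-Moore property for $(H,L)$.

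The only step requiring genuine verification is the positive definiteness of the zero-extension in (ii); this is the single place where the openness of $H$ enters, and it admits the short coset-decomposition argument sketched above. Once granted, the remainder is a transparent translation through the positive definite characterization, and no obstacle of a subtler nature arises.
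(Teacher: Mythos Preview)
Your proof is correct and follows essentially the same approach as the paper: (i) is immediate from restricting representations, (iii) uses pullback along the quotient map $N\rtimes H\to H$, and (ii) relies on the extension-by-zero of positive definite functions from an open subgroup combined with the characterization in Proposition~\ref{GNS}. You supply more detail than the paper (notably the coset-block argument for positive definiteness of the extension and the explicit translation of the two alternatives), but the strategy is identical.
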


\begin{proof} (i) is clear from definitions. To prove (ii), let $H$ be an open subgroup, and let $\phi$ be a positive definite function on $H$. We check that $\phi$ satisfies condition (ii) in Proposition \ref{GNS}. This follows from the fact that $\phi$ extends to a {\it positive definite} function on $G$ (by setting it equal to 0 outside of $H$, see e.g. Exercise C.6.7 in \cite{BHV}), together with the relative Howe-Moore property for $(G,L)$. So the result follows from Proposition \ref{GNS}. Finally, (iii) follows from the fact that any representation of $H$ can be viewed as a representation of $G$ via the quotient map $N\rtimes H\rightarrow H$.
\end{proof}

\begin{Prop}\label{perm2} Let $N, L$ be closed, normal subgroups of the locally compact group $G$. If the pair $(G,N)$ has the relative Howe-Moore property, then so has the pair $(G/L,N/(N\cap L))$; moreover, the composite map $N\to G \to G/L$ is either trivial or proper. In particular, every closed normal subgroup of $G$ which is properly contained in $N$, is compact.
\end{Prop}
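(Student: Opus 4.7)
The plan is to prove the first assertion by a lifting argument, and the ``trivial or proper'' dichotomy by invoking the relative Howe-Moore property of $(G,N)$ on the pullback of the left regular representation of $G/L$. For the first assertion, lift a unitary representation $\pi$ of $G/L$ to $\tilde\pi := \pi \circ p$ on $G$, where $p: G \to G/L$ is the quotient, and apply relative Howe-Moore for $(G,N)$ to $\tilde\pi$. Either $\tilde\pi$ has a non-zero $N$-invariant vector---which, since $\tilde\pi(n) = \pi(p(n))$, is invariant under the image $p(N) = NL/L \cong N/(N\cap L)$ for $\pi$---or $\tilde\pi|_N$ is $C_0$; in the latter case, $p|_N: N \to NL/L$ is a continuous open surjection, and the image under $p$ of the compact sub-level sets of coefficients of $\tilde\pi|_N$ yields compact sub-level sets for coefficients of $\pi|_{NL/L}$, giving the $C_0$-property downstairs.

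For the dichotomy on $p|_N: N \to G/L$, apply relative Howe-Moore to $\pi := \lambda_{G/L} \circ p$, the pullback of the left regular representation of $G/L$, whose matrix coefficients lie in $C_0(G/L)$ by a standard density argument from $C_c(G/L)$. If $\pi|_N$ is $C_0$: for each $x_0 \in G/L$, take a symmetric relatively compact open neighborhood $U$ of the identity, so that the coefficient $\phi(y) = \langle \lambda_{G/L}(y) \mathbf{1}_U, \mathbf{1}_{x_0 U}\rangle = |yU \cap x_0 U|$ is $\geq |U|/2$ on some open neighborhood $V_{x_0}$ of $x_0$. The $C_0$-hypothesis then makes $N \cap p^{-1}(V_{x_0})$ relatively compact; a finite subcover of any compact $K \subseteq G/L$ by such $V_{x_i}$ gives $N \cap p^{-1}(K)$ compact, so $p|_N$ is proper.

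The main technical step is the alternative, that $\pi$ has a non-zero $N$-invariant vector $v$. By continuity $v$ is invariant under $\overline{NL/L}$; writing $M := \overline{NL}$, the diagonal coefficient $\phi_{v,v} \in C_0(G/L)$ is $M/L$-bi-invariant and equals $\|v\|^2 > 0$ on $M/L$, forcing $M/L$ to be compact. If $M = L$ then $N \subseteq L$ and the composite is trivial. Otherwise, the orthogonal complement in $L^2(G/L)$ of the subspace of $M/L$-invariants is a non-zero $G$-invariant subrepresentation of $\pi$ with no $N$-invariants, so relative Howe-Moore for $(G,N)$ renders its coefficients $C_0$ on $N$. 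For any unit vector $v'$ in this complement, $\phi_{v',v'}$ is continuous with $\phi_{v',v'}(e) = 1$, so $|\phi_{v',v'} \circ p| \geq 1/2$ on $N \cap p^{-1}(W)$ for some open neighborhood $W$ of $e$ in $G/L$; the $C_0$-property makes $N \cap p^{-1}(W)$ compact, and it contains $N \cap L$. Hence $N \cap L$ is compact, and $N/(N \cap L) \cong NL/L$ is locally compact and dense in the compact group $M/L$. By the classical fact that any locally compact subgroup of a Hausdorff topological group is closed, $NL/L = M/L$ is compact. Therefore $N$, a compact extension of a compact group, is itself compact, and $p|_N$ is trivially proper.

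The ``in particular'' statement is then immediate: for $L \subsetneq N$ closed normal in $G$, the composite $N \to G/L$ has non-trivial image, hence by the dichotomy is proper, and its kernel $L = N \cap L$ is compact.
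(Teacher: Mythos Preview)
Your proof of the first assertion is fine; the remark that $p|_N$ is open is unjustified and unnecessary---you only use that continuous images of compact super-level sets are compact.

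For the dichotomy you take a genuinely different route from the paper. The paper applies Gelfand--Raikov to obtain an \emph{irreducible} representation of $G/L$ that is nontrivial on the image of $N$; by normality and irreducibility it has no fixed vector for that image, so (via the first assertion) its coefficients pulled back to $N$ are $C_0$ while being constant equal to $\|\xi\|^2>0$ along $N\cap L$, forcing $N\cap L$ compact. The paper stops there, treating this as the content of ``proper''. You instead work with the regular representation $\lambda_{G/L}$: your Case~A yields genuine properness of $p|_N$ directly, and your Case~B extracts compactness of $\overline{NL}/L$ from an $N$-fixed $L^2$-function and then passes to the complement of the $M/L$-invariants to recover a $C_0$ coefficient. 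This is more self-contained (no Gelfand--Raikov) and actually delivers properness rather than only compactness of the kernel.

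There is, however, a gap at the end of Case~B. You assert ``$N/(N\cap L)\cong NL/L$ is locally compact'' and then invoke that locally compact subgroups are closed. But the map $N/(N\cap L)\to NL/L$ is a priori only a continuous bijection; you have not shown that $NL/L$ is locally compact \emph{in the subspace topology from $G/L$}, and absent the Howe--Moore hypothesis it need not be (take $G=\R$, $N=\Z$, $L=\alpha\Z$ with $\alpha$ irrational). The repair uses what you already established: $N\cap p^{-1}(W)$ is relatively compact for some open neighbourhood $W$ of $e$ in $G/L$. Since $p(N)$ is dense in the compact group $M/L$, finitely many translates $p(n_i)W$ with $n_i\in N$ cover $M/L$, and then
\[
N \;=\; N\cap p^{-1}(M/L)\;\subseteq\;\bigcup_i n_i\bigl(N\cap p^{-1}(W)\bigr)
\]
is a finite union of relatively compact sets. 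Hence $N$ is compact and $p|_N$ is proper. (Equivalently, relative compactness of $N\cap p^{-1}(W)$ is exactly what makes $N/(N\cap L)\to NL/L$ a homeomorphism, after which your closedness argument goes through.)
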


\begin{proof} The first statement is clear from definitions. To prove the second, we assume that $N$ is not contained in $L$, and prove that $N\cap L$ is compact. By the Gelfand-Raikov theorem (see e.g. \cite[13.6.7]{Dix}), we find a unitary irreducible representation $(\pi,{\mathcal H})$ of $G/L$ which is non-trivial on $N/(N\cap L)$.
As $N/(N\cap L)$ is normal in $G/L$, the set ${\mathcal H}^0$ of $\pi(N/(N\cap L))$-fixed vectors is a proper $\pi(G/L)$-invariant subspace of ${\mathcal H}$. By irreducibility, ${\mathcal H}^0=\{0\}$. By the relative Howe-Moore property for $(G/L,N/(N\cap L))$, coefficients of $\pi$ are $C_0$ in restriction to $N/(N\cap L)$. Since these coefficients are constant along $N\cap L$, this forces $N\cap L$ to be compact. The third statement is an immediate consequence of the second one.
\end{proof}

\section{Structural consequences of the relative Howe-Moore Property}\label{sec:3}

The aim of this section is to prove Theorem \ref{structurerelHM}, stating that, if $(G,N)$ has the relative Howe-Moore property and $N$ is normal in $G$, the structure of $N$ is quite constrained. We denote by $\T$ the multiplicative group of complex numbers of modulus 1.

\begin{Prop}\label{relHM} Let $G$ be a locally compact group, and let $N$ be a closed, non-compact normal subgroup of $G$ such that $(G,N)$ has relative Howe-Moore property. Then the action of $G$ on the set $Hom(N,\T)$ of characters of $N$, has no non-trivial fixed point.
\end{Prop}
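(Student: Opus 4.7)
The plan is to derive a contradiction from the existence of a non-trivial $G$-fixed character $\chi\colon N\to \T$ by exhibiting a unitary representation of $G$ on which $N$ acts through the scalar $\chi(n)$; the equality $|\chi(n)|=1$ on the non-compact group $N$ will then rule out both the $C_0$-behaviour required by the relative Howe-Moore property and the conclusion of Lemma \ref{finitedim}(a).

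The natural candidate is the induced representation $\pi:=\mathrm{Ind}_N^G\chi$, acting on the space $\HH$ of (classes of) measurable functions $f\colon G\to\C$ satisfying $f(gn)=\chi(n)^{-1}f(g)$ for all $n\in N$ and $\int_{G/N}|f|^2<\infty$ with respect to a quasi-invariant measure on $G/N$. The $G$-invariance of $\chi$ means that the hermitian line bundle over $G/N$ associated to $\chi$ is trivialisable, so $\HH$ is isomorphic as a Hilbert space to $L^2(G/N)$ and is in particular non-zero; the degenerate case $N=G$ is handled directly by taking $\pi=\chi$ itself.

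The key step is the computation, for $n\in N$, $g\in G$ and $f\in\HH$,
$$(\pi(n)f)(g)=f(n^{-1}g)=f\bigl(g\cdot(g^{-1}n^{-1}g)\bigr)=\chi(g^{-1}n^{-1}g)^{-1}f(g)=\chi(n)\,f(g),$$
where the third equality applies the defining relation of $\HH$ to the element $g^{-1}n^{-1}g\in N$, and the fourth invokes the $G$-invariance of $\chi$. Hence $\pi(n)=\chi(n)\cdot\Id_{\HH}$ for every $n\in N$.

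The conclusion is then immediate in either of two ways. First, every one-dimensional subspace of $\HH$ is $\pi(N)$-invariant, and Lemma \ref{finitedim}(a) forces each such subspace to consist of $\pi(N)$-fixed vectors, so $\chi(n)=1$ for all $n\in N$. Equivalently, if $\chi\neq 1$ then $\pi$ has no non-zero $N$-fixed vector, whereas $\langle \pi(n)f,f\rangle=\chi(n)\|f\|^2$ has constant modulus $\|f\|^2$, contradicting the $C_0$-conclusion of the relative Howe-Moore property on the non-compact group $N$. The only mildly technical point is verifying $\HH\neq 0$ in the $G$-invariant induction, which is transparent from the triviality of the associated line bundle.
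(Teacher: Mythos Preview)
Your proof is correct and follows essentially the same route as the paper: both use the induced representation $\mathrm{Ind}_N^G\chi$, compute that $N$ acts via $(\pi(n)f)(g)=(g\chi)(n)f(g)$, and conclude from the $G$-invariance of $\chi$ that the diagonal coefficient equals $\chi(n)\|f\|^2$, which is not $C_0$. Your version is marginally more streamlined in that you invoke $G$-invariance directly inside the computation (obtaining $\pi(n)=\chi(n)\,\Id$) rather than first proving the general formula and a separate claim about $N$-fixed vectors, and you also take care to note that $\HH\neq 0$; the alternative conclusion via Lemma~\ref{finitedim}(a) is a nice touch but not a different idea.
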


\begin{proof} Let $\chi\in Hom(N,\T)$, and let $\pi_{\chi}=Ind_N^G\,\chi$ be the induced representation. Remember that the Hilbert space of $\pi_{\chi}$ is $${\mathcal H}_{\chi}=\{f:G\rightarrow\C, \mbox{measurable}, f(gn)=\chi(n^{-1})f(g)\, \mbox{for every} \,n\in N \,$$
$$\mbox{and almost every}\,g\in G, \int_{G/N}|f(x)|^2\,dx<\infty \}.$$
and that the representation $\pi_{\chi}$ is given by $(\pi_{\chi}(g)(f))(h)=f(g^{-1}h)$ for $g,h\in G$ and $f\in{\mathcal H}_{\chi}$.
For $g\in G$ and $n\in N$, we have
$$(\pi_{\chi}(n)f)(g)=f(n^{-1}g)=f(g.(g^{-1}n^{-1}g))=\chi(g^{-1}ng)f(g)$$
i.e.
\begin{equation}\label{indrep}
(\pi_{\chi}(n)f)(g)=(g\chi)(n)f(g).
\end{equation}

As a consequence:

\begin{equation}\label{coeffind}
\langle \pi_{\chi}(n)(f)|f\rangle = \int_{G/N}(x\chi)(n)|f(x)|^2\,dx
\end{equation}
(here $(x\chi)(n)=\chi(g^{-1}ng)$, where $x=gN\in G/N$).

{\bf Claim:} $\pi_{\chi}$ has non-zero $N$-invariant vectors if and only if $\chi\equiv 1$.

Indeed, if $\chi\equiv 1$, then $\pi_{\chi}$ is the left regular representation of $G/N$ (viewed as a representation of $G$), and it is trivial on $N$. Conversely, if $f\in{\mathcal H}_{\chi}$ is a non-zero $N$-invariant function, then by Equation (\ref{indrep}) we have $(g\chi)(n)f(g)=f(g)$ for every $n\in N$ and almost every $g\in G$. Taking $g$ in a set of non-zero measure where $f(g)\neq 0$, we get $g\chi\equiv 1$, i.e. $\chi\equiv 1$, proving the claim.
\medskip

Assume now that $(G,N)$ has the relative Howe-Moore property. Let $\chi\in Hom(N,\T)$ be a character of $N$, which is $G$-fixed. By Equation (\ref{coeffind}), we then have, for $n\in N$ and $f\in{\mathcal H}_{\chi}$:
$$\langle \pi_{\chi}(n)(f)|f\rangle =\chi(n)\|f\|^2,$$
so that the restriction of $\pi_{\chi}$ to $N$ is certainly not $C_0$. By the relative Howe-Moore property, $\pi_{\chi}$ has non-zero $N$-fixed vectors, and by the claim this implies $\chi\equiv 1$.
\hfill$\square$
\medskip

\begin{Prop}\label{open} Let $(G,H)$ be a pair with the relative Howe-Moore property. Let $U$ be an open subgroup of $G$, properly contained in $H$. Then $U$ is compact.
\end{Prop}

{\bf Proof}: Clearly we may assume that $H$ is not compact. Note that, by Proposition \ref{perm1} (ii), since $H$ is open in $G$ our assumption is equivalent to $H$ having the Howe-Moore property. First we show that $U$ has infinite index in $H$. Suppose not. Consider then the representation of $H$ on $\ell^2_0(H/U)$, the orthogonal of constants in $\ell^2(H/U)$. Since this representation has no $H$-fixed vector, it has coefficients vanishing on $H$. But these coefficients are also constant along the intersection of all conjugates of $U$, which is of finite index in $H$, this is a contradiction.

Consider then the permutation representation $\pi$ of $H$ on $\ell^2(H/U)$. Since $U$ has infinite index in $H$, it has no $H$-fixed vector, so its coefficients are $C_0$ on $H$. But the coefficient $g\mapsto\langle\pi(g)\delta_U|\delta_U\rangle$ on the characteristic function $\delta_U$ of $U$, is equal to 1 on $U$. This forces $U$ to be compact.
\end{proof}

In particular, if $G$ is a group with the Howe-Moore
property, then every proper open subgroup of $G$ is compact. If such a $G$ admits a maximal compact open subgroup $K$ (e.g. $K=\SL_n(\Z_p)$ in $G=\SL_n(\Q_p)$), then $K$ is also maximal as a subgroup.

\begin{Ex}\label{PGL} Let $\K=\R$ or $\K=\Q_p$. Set $G=\PGL_2(\K)$ and $N=\PSL_2(\K)$. The group $G$ does {\it not} have the
Howe-Moore property (because $G$ contains $N$ as a proper, open subgroup with finite index) while the pair $(G,N)$ has the
relative Howe-Moore property (since $N$ is the quotient of the Howe-Moore group $\SL_2(\K)$ by its finite center).
\end{Ex}

A group is {\it quasi-finite} if it has no proper infinite subgroup. Proposition \ref{open} says that a discrete group with the Howe-Moore
property is quasi-finite. This can be made more precise as follows.

\begin{Lem}\label{dihm}
An infinite discrete Howe-Moore group $G$ is finitely generated and finite-by-(quasi-finite simple).
\end{Lem}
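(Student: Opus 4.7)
The plan is to extract the two conclusions---finite generation and the finite-by-(quasi-finite simple) structure---by combining the subgroup restriction of Proposition \ref{open} with the dichotomy of Proposition \ref{loccompHM}. First, applying Proposition \ref{open} with $H=G$ and recalling that every subgroup of a discrete group is open, every proper subgroup of $G$ is finite (compact in a discrete group being finite); that is, $G$ is quasi-finite.

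Next, I would rule out the possibility that $G$ is locally finite. If it were, the Hall--Kulatilaka theorem would produce an infinite abelian subgroup $A \leq G$; by quasi-finiteness, $A$ cannot be proper, so $A = G$. Then $G$ would be infinite abelian, contradicting the Howe-Moore property: Proposition \ref{relHM} with $N = G$ says the conjugation action of $G$ on $\mathrm{Hom}(G,\T)$ has only the trivial fixed point, but for abelian $G$ this action is trivial, forcing $\mathrm{Hom}(G,\T) = \{1\}$ and hence $G = \{1\}$ by Pontryagin duality (alternatively, Lemma \ref{finitedim}(c) gives $G = \overline{[G,G]}$, incompatible with an infinite abelian group). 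Consequently $G$ contains a finitely generated infinite subgroup, which by quasi-finiteness must equal $G$, so $G$ is finitely generated.

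Finally, I would invoke Proposition \ref{loccompHM}. Its first alternative (locally elliptic and not compactly generated) is precisely the case ruled out above, so the second alternative holds: there exists a finite normal subgroup $K \vartriangleleft G$ such that $G/K$ is topologically simple, i.e.\ simple in the discrete setting. Since $G$ is infinite and $K$ finite, $G/K$ is infinite, and it is quasi-finite because any proper subgroup of $G/K$ pulls back to a proper subgroup of $G$ containing $K$, which is finite by quasi-finiteness of $G$. This exhibits $G$ as finite-by-(quasi-finite simple).

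The step I expect to be the main obstacle is ruling out the locally finite case. The quasi-finite locally finite possibility (which includes Pr\"ufer groups $\Z(p^\infty)$) is not easily eliminated from the representation-theoretic tools assembled so far; the natural move is to invoke the Hall--Kulatilaka theorem as an external input to reduce it to the abelian case, where incompatibility with the Howe-Moore property is immediate from Proposition \ref{relHM} or Lemma \ref{finitedim}(c).
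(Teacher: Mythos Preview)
Your argument follows the paper's route almost exactly: quasi-finiteness via Proposition~\ref{open}, elimination of the locally finite case through Hall--Kulatilaka reducing to an abelian group (which Lemma~\ref{finitedim}(c) excludes), and hence finite generation. The paper phrases the Hall--Kulatilaka step as ``a quasi-finite locally finite group is a Pr\"ufer group $C_{p^\infty}$'', but this is the same input.

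The one genuine problem is your appeal to Proposition~\ref{loccompHM} in the last step. In the paper's logical order, Proposition~\ref{loccompHM} is not proved directly: it is extracted from Theorem~\ref{structurerelHM} (by taking $G=N$), and the discrete case of that theorem is handled via Proposition~\ref{dino}, whose proof in turn invokes Lemma~\ref{dihm}. So as written your argument is circular. The fix is immediate and is precisely what the paper does: take $K$ to be the subgroup generated by all finite normal subgroups of $G$. Since $G$ is quasi-finite and (as you have shown) not locally finite, $K$ is a proper subgroup, hence finite. The quotient $G/K$ is still Howe-Moore (Proposition~\ref{perm2}), hence quasi-finite, and by construction it has no nontrivial finite normal subgroup; therefore every proper normal subgroup is trivial and $G/K$ is simple. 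This replaces your invocation of Proposition~\ref{loccompHM} by two lines and removes the circularity.
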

\begin{proof}
If $G$ is Howe-Moore, by Proposition \ref{open}, $G$ is quasi-finite. Let $K$ be the locally finite (= locally elliptic) radical of $G$, i.e. the subgroup generated by all finite normal subgroups of $G$. If $K$ is finite, then it is clear that $G/K$ is finitely generated and still Howe-Moore, hence simple. Otherwise, $G$ is locally finite and by a classical result of Hall and Kulatilaka \cite{HK}, $G$ is isomorphic to $C_{p^\infty}=\Z[1/p]/\Z$ for some prime $p$. But an infinite abelian group is not Howe-Moore, e.g. by Lemma \ref{finitedim}(c).
\end{proof}

\begin{Rem}\label{quasifini}
Quasi-finite finitely generated groups are known to exist
\cite{Ols}, but essentially nothing is known about their unitary
representations \footnote{However, since every non-elementary
torsion-free word hyperbolic group has quasi-finite quotients
\cite{Ols2}, there exist quasi-finite groups with Property (T).}.
\end{Rem}

\begin{Prop}
Let $G$ be a locally compact group, $N$ an infinite normal discrete subgroup such that $(G,N)$ has the relative Howe-Moore Property. Suppose that the centralizer of $N$ is open in $G$ (for instance, this holds if $N$ is finitely generated, or if $G$ is a Lie group). Then $N$ is finitely generated, and finite-by-(quasi finite simple) (in particular it is torsion), and $N/K$ has the Howe-Moore Property for some normal finite subgroup $K$.\label{dino}
\end{Prop}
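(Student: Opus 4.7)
The strategy is to exploit the openness of $C := C_G(N)$ in $G$ to reduce to a question about the discrete group $N/Z(N)$, and then invoke Lemma \ref{dihm}.

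Step one is to rule out $N$ being abelian. If $N$ were abelian then $N \subseteq C$, and Proposition \ref{perm1}(ii) would give that $(C,N)$ has the relative Howe-Moore property. Since $N$ is non-compact and normal in $C$, Proposition \ref{relHM} would force $C$ to act without non-trivial fixed point on $\mathrm{Hom}(N,\T)$; but $C=C_G(N)$ acts trivially on $N$, hence trivially on $\mathrm{Hom}(N,\T)$, while the latter is non-trivial since $N$ is a non-trivial discrete abelian group. This contradiction shows $N \not\subseteq C$.

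Step two is to form $M := NC$, which is an open subgroup of $G$ (as it contains the open subgroup $C$), and in which both $N$ and $C$ are closed normal subgroups (the centralizer of a normal subgroup is itself normal). By Proposition \ref{perm1}(ii) the pair $(M,N)$ has the relative Howe-Moore property. Applying Proposition \ref{perm2} inside $M$ with the normal subgroup $C$, and using that $N \cap C = Z(N)$ and $M/C \cong N/Z(N)$ by the second isomorphism theorem, I obtain that $(N/Z(N),\,N/Z(N))$ has the relative Howe-Moore property, i.e.\ the discrete group $N/Z(N)$ has the Howe-Moore property. The same proposition tells us that the composite map $N \to M \to M/C$ is either trivial or proper; step one rules out triviality, so the map is proper and $Z(N)$ is finite.

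Step three is to apply Lemma \ref{dihm} to the infinite discrete Howe-Moore group $N/Z(N)$: it is finitely generated and contains a finite normal subgroup $F$ with $(N/Z(N))/F$ quasi-finite simple. Pulling $F$ back to $N$ yields a finite normal subgroup $K_0 \supseteq Z(N)$ of $N$ with $N/K_0$ quasi-finite simple, showing that $N$ is finite-by-(quasi-finite simple). An infinite quasi-finite simple group is torsion (an element of infinite order would generate a copy of $\Z$, which admits proper infinite subgroups), making $N$ torsion. Finite generation of $N$ follows from that of $N/Z(N)$ together with the finiteness of $Z(N)$; and taking $K := Z(N)$ already gives a finite normal subgroup of $N$ with $N/K$ having the Howe-Moore property. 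The essential step is the identification $M/C \cong N/Z(N)$ in step two, which is what promotes the relative Howe-Moore property for $(M,N)$ to the absolute Howe-Moore property for $N/Z(N)$; the only delicate point before that is ruling out the abelian case so that this reduction is not vacuous.
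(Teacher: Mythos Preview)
Your proof is correct and follows essentially the same route as the paper's: pass to the open subgroup $CN$, rule out the abelian case via Proposition~\ref{relHM}, use Proposition~\ref{perm2} with $L=C$ to see that $K=N\cap C=Z(N)$ is finite and that $N/K\cong NC/C$ inherits the Howe-Moore property, and conclude with Lemma~\ref{dihm}. The only thing you have not addressed is the parenthetical ``for instance'' claim (that $C_G(N)$ is open when $N$ is finitely generated or $G$ is a Lie group), which the paper does verify.
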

\begin{proof}
Let us first check the ``for instance" assertion. In case $G$ is a Lie group, $G_0$ is open and since $N$ is discrete, it is centralized by $G_0$. Also, if $N$ is finitely generated, its automorphism group is discrete and the centralizer of $N$ is open.

Let us prove the main assertion. Let $C$ be the centralizer of $N$, which is open and normal in $G$. Then $CN$ is open in $G$, so $(CN,N)$ has the relative Howe-Moore Property by Proposition \ref{perm1}(ii). If $N$ is abelian, it is central in $CN$, so by Proposition \ref{relHM}, $N$ is trivial, a contradiction. Therefore by Proposition \ref{perm2}, $K=N\cap C$ is finite. Then by Proposition \ref{perm2}, $(NC/C,N/K)$ has the Howe-Moore Property. Since $NC/C=N/K$, we obtain that $N/K$ has the Howe-Moore Property. The remaining then follows from Lemma \ref{dihm}.
\end{proof}

Recall that a locally compact group $G$ is {\it characteristically simple} if the only topologically characteristic subgroups of $G$ are the trivial subgroups. The following result contains Proposition \ref{loccompHM} (obtained by taking $G=N$). Denote by $W(G)$ the union of all normal compact subgroups of $G$. This is a characteristic subgroup of $G$, not always closed (see Example \ref{WuYu}). We will need a result of Platonov \cite{Pla}: $G$ admits a closed locally elliptic subgroup containing all closed normal locally elliptic subgroups: this subgroup is called the {\it elliptic radical} of $G$, denoted by $R_{ell}(G)$. Clearly $W(G)\subset R_{ell}(G)$.

\begin{Thm}\label{structurerelHM} Let $G$ be a locally compact group, and let $N$ be a closed, non-compact normal subgroup of $G$ such that $(G,N)$ has relative Howe-Moore property.
Then

\begin{itemize}
\item[(1)] either $N$ is locally elliptic, not compactly generated, and $W(N)$ is dense in $N$,
\item[(2)] or $W(N)$ is compact and $N/W(N)$ is characteristically simple.
If moreover $N$ is compactly generated, then one of the following cases occurs:
\begin{itemize}
%\item[(a)] $N/K$ is a discrete, simple Howe-Moore group;
\item[(a)] $N/W(N)$ is isomorphic to $\R^n$ for some $n\geq 1$, and the $G$-representation on $N/W(N)$ is irreducible and non-trivial.
\item[(b)] $N/W(N)$ is a topologically simple group; if in addition it is discrete, then it is Howe-Moore;
\end{itemize}
\end{itemize}
\end{Thm}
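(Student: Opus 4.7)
My plan is to use Proposition \ref{perm2} as the main engine: it forces every closed $G$-normal subgroup of $N$ properly contained in $N$ to be compact, hence contained in $W(N)$. The dichotomy between (1) and (2) will come from applying this to $\overline{W(N)}$, which is closed and topologically characteristic in $N$, so $G$-normal; it must be either equal to $N$ (in which case $W(N)$ is dense in $N$) or compact, in which case $\overline{W(N)}$ is itself a compact normal subgroup of $N$, so $W(N)=\overline{W(N)}$ is closed and compact.

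In the dense case, I would observe that every compact subgroup is locally elliptic, so $W(N)\subseteq R_{ell}(N)$; Platonov's theorem makes $R_{ell}(N)$ closed, hence $R_{ell}(N)=\overline{W(N)}=N$ and $N$ is locally elliptic. Non-compactness together with the fact that a compactly generated locally elliptic locally compact group is compact then rules out compact generation, giving alternative (1). In the compact case, setting $N':=N/W(N)$, any closed characteristic subgroup $M/W(N)$ of $N'$ pulls back to a closed characteristic subgroup of $N$ containing $W(N)$, which is $G$-normal and by Proposition \ref{perm2} is either $N$ or contained in $W(N)$; hence $N'$ is characteristically simple, giving alternative (2).

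Assume now further that $N$ is compactly generated, so that $N'$ is a compactly generated, characteristically simple, non-compact locally compact group. Because the identity component $N'_0$ is characteristic, $N'$ is either connected or totally disconnected. In the connected case I would apply Yamabe's theorem (the maximal compact normal subgroup is characteristic and trivial by non-compactness) to identify $N'$ with a connected Lie group, then analyse the characteristic subgroups given by the nilpotent radical, the solvable radical, and the centre to conclude that $N'$ is either abelian, hence isomorphic to $\R^n$ (case (a)), or centreless semisimple with a single simple factor, hence topologically simple (case (b)). The totally disconnected case, where I expect the main obstacle to lie, requires a structural input beyond Proposition \ref{perm2} (for example an analysis of the discrete residual) to rule out configurations such as non-trivial direct powers of a topologically simple group permuted by the $G$-action.

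The refinements in (a) and (b) are then handled as follows. For (a), any $G$-invariant subspace $V\subseteq\R^n$ pulls back to a $G$-normal closed subgroup of $N$, which by Proposition \ref{perm2} must be $W(N)$ or $N$, giving irreducibility; triviality of the $G$-action would force every character of $N'$ to be $G$-fixed, contradicting Proposition \ref{relHM} applied to $(G/W(N),N')$ (which inherits the relative Howe-Moore property via Proposition \ref{perm2}). For the discrete sub-case of (b), finite generation of $N'$ makes its centralizer $C$ in $G/W(N)$ open, so Proposition \ref{perm1}(ii) passes the relative Howe-Moore property to $(CN',N')$, and Proposition \ref{perm2} yields that $CN'/C\cong N'/Z(N')$ has the Howe-Moore property; characteristic simplicity of $N'$ together with the non-existence of non-compact, compactly generated, discrete abelian, characteristically simple groups forces $Z(N')=\{e\}$, so $N'$ itself is Howe-Moore.
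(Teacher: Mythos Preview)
Your treatment of the dichotomy (1)/(2) and of the refinements in case~(a) and the discrete sub-case of~(b) matches the paper's approach. The genuine gap is the step you yourself flag: ruling out that $N'=N/W(N)$ is a non-trivial product (or quasi-product) of several copies of a single topologically simple group, permuted by the $G$-action. Note that this gap is present not only in your totally disconnected case but already in your connected case: a centreless connected semisimple Lie group of the form $S^k$ with $k\ge 2$ is characteristically simple (the swap automorphisms prevent any factor from being characteristic), yet it is \emph{not} topologically simple. So ``centreless semisimple with a single simple factor'' does not follow from characteristic simplicity alone, and your Yamabe/radical analysis stops one step short.

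The paper closes this gap in two moves. First, instead of a connected/totally disconnected split, it invokes a structure theorem of Caprace--Monod: a compactly generated, non-compact, characteristically simple locally compact group is either a vector group, or discrete, or a quasi-product $\overline{S_1\cdots S_k}$ of finitely many pairwise isomorphic non-abelian topologically simple minimal closed normal subgroups. Second, and this is the point you are missing, it shows that the permutation homomorphism $\alpha:G\to\mathrm{Sym}(k)$ on the factors is continuous, equivalently that $G_1=\ker\alpha$ is \emph{open}: writing $A_i=\{g\in G:[gS_ig^{-1},S_i]=1\}$, each $A_i$ is closed, their union is the complement of $G_1$, hence $G_1$ is open. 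Then Proposition~\ref{perm1}(ii) gives relative Howe--Moore for $(G_1,N)$; since each $S_i$ is normal in $G_1$ and non-compact, Proposition~\ref{perm2} forces $N=S_1$, i.e.\ $k=1$. This single argument handles the connected semisimple and totally disconnected non-discrete cases simultaneously, and is exactly the ``structural input beyond Proposition~\ref{perm2}'' you were looking for; the discrete residual plays no role.
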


\begin{proof} By Proposition \ref{perm2}, $\overline{W(N)}$ is either equal to $N$, or compact. Clearly, if it is compact, then by definition $\overline{W(N)}\subset W(N)$; so $N/W(N)$ has no non-trivial compact characteristic subgroup. By the Howe-Moore Property and Proposition \ref{perm2}, it is characteristically simple.

Otherwise, $W(N)$ is dense in $N$, so that $N=R_{ell}(N)$, i.e. $N$ is locally elliptic. By \cite[Lemma~1]{Pla}, in a locally elliptic locally compact group, every compact subset is contained in a compact subgroup; therefore as $N$ is not compact, it cannot be compactly generated.

Let us now suppose that $N$ is compactly generated. In particular, $W(N)$ is compact and $N/W(N)$ is characteristically simple. Since $W(N)$ does not play any role in the sequel, we may assume $W(N)=1$.

Before proceeding, let us say that a topological group $H$ is a {\it quasi-product} if there exists pairwise commuting closed normal subgroups $H_1,H_2,...,H_k$, with $H_i\cap H_j=\{1\}$ for $i\neq j$, such that $H=\overline{H_1...H_k}$. By a result by Caprace and Monod (\cite{CaMo}, Corollary D): a compactly generated, non-compact, characteristically simple locally compact group is either a vector group, or discrete, or the quasi-product of its minimal closed normal subgroups (which are finitely many pairwise isomorphic nonabelian topologically simple groups). Let us consider the three cases successively.
\begin{itemize}
\item First case: $N\simeq \R^n$ is a vector group. The action of $G$ is irreducible and non-trivial by Propositions \ref{perm2} and \ref{relHM}.

\item Second case: $N$ is discrete. Then (b) follows from Proposition \ref{dino}.

\item Third case: $N$ is a quasi-product of its minimal closed normal subgroups $S_1,...,S_k$, which are nonabelian noncompact simple groups permuted by $G$. The homomorphism $\alpha: G\rightarrow \textnormal{Sym}(k)$ given by the $G$-action on the $S_i$'s, is continuous. Indeed, for all $i=1,...,k$, define $A_i=\{g:[gS_ig^{-1},S_i]=1\}$. Then $A_i$ is closed. So the union of all $A_i$'s is closed, but this is just the complement of the kernel $G_1$ of $\alpha$, which is therefore open. By Proposition \ref{perm1}(ii), $(G_1,N)$ has the Howe-Moore Property. Since $S_1$ is normal in $G_1$, we deduce from Proposition \ref{perm2} that $N=S_1$. We do not know if, in this case, $N$ is itself a Howe-Moore group.\qedhere
\end{itemize}\end{proof}

\begin{Ex} An example of a pair $(G,N)$ with the relative Howe-Moore property and $N$ locally elliptic, not compactly generated, is $(\GL_n(\Z_p)\ltimes\Q_p^n,\Q_p^n)$, for $n\ge 1$, by Theorem \ref{main}.
Note that this group is entirely elliptic. Also, using the isomorphism between $\Q_p/\Z_p$ and the discrete group $\Z[1/p]/\Z$, we see that $(\GL_n(\Z_p)\ltimes(\Z[1/p]/\Z)^n,(\Z[1/p]/\Z)^n)$ has the relative Howe-Moore Property (note that in this case, Proposition \ref{dino} does not apply).
\end{Ex}

\begin{Ex}\label{WuYu} The following example is due to Wu-Yu \cite{WY}. Let $(p_i)$ be any infinite family of odd primes (which can be constant, or injective). Consider the semidirect product
$$\left(\bigoplus_i\Z/p_i\Z\right)\rtimes\left(\prod_i(\Z/p_i\Z)^*\right),$$
with the multiplication action on each factor. Then each $\Z/p_i\Z\rtimes(\Z/p_i\Z)^*$ is a compact normal subgroup, and they generate a dense subgroup.
 However, any element $(0,\sigma)$ with $\sigma$ of infinite support has an unbounded conjugacy class, and is therefore not contained in a compact normal subgroup.
\end{Ex}

\section{Semidirect products}\label{sec:4}

The main result of this section is Theorem \ref{HMforsemidir}, providing non-trivial examples of pairs with the relative Howe-Moore property. We need the following general elementary lemma, that the reader can prove as an exercise.

\begin{Lem}\label{GHV} Let $G$ be a group, let $V$ be a vector space over any field, and let $\pi$ be a representation of $G$ on $V$. The following are equivalent:
\begin{enumerate}
\item[i)] $\pi$ is irreducible and non-trivial;
\item[ii)] the only $\pi(G)$-invariant {\it affine} subspaces of $V$ are $\{0\}$ and $V$;
\item[iii)] for every non-zero $v\in V$, the orbit $\pi(G)v$ is not contained in any affine hyperplane.
\end{enumerate}
\end{Lem}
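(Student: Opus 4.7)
The plan is to prove the equivalence via the cycle $(i)\Rightarrow(ii)\Rightarrow(iii)\Rightarrow(i)$. The unifying dictionary is that any affine subspace $A\subseteq V$ can be written $A=v_0+W$ where $W$ is a linear subspace (the direction of $A$), and such an $A$ is $\pi(G)$-invariant if and only if $W$ is a $\pi(G)$-invariant linear subspace and $\pi(g)v_0-v_0\in W$ for every $g\in G$; in particular, the direction of an affine hyperplane is a linear hyperplane.

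For $(i)\Rightarrow(ii)$, I would take a $\pi(G)$-invariant affine subspace $A=v_0+W$ and apply irreducibility to its direction $W$: either $W=V$, giving $A=V$, or $W=\{0\}$, so that $A=\{v_0\}$ is a $\pi(G)$-fixed singleton. In the latter case, the subspace spanned by $v_0$ is $\pi(G)$-invariant and $\pi$ acts trivially on it; irreducibility leaves the options that this subspace is $\{0\}$ or $V$, but the second would force $V$ to be one-dimensional with $\pi$ trivial, contradicting non-triviality. Hence $v_0=0$ and $A=\{0\}$.

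For $(ii)\Rightarrow(iii)$, given a non-zero vector $v$, I would take $A$ to be the affine hull of the orbit $\pi(G)v$. For each $g\in G$, the image $\pi(g)A$ is an affine subspace containing $\pi(G)v$ and hence contains $A$; applying the same reasoning to $g^{-1}$ yields $\pi(g)A=A$, so $A$ is $\pi(G)$-invariant. Since $v\in A$ and $v\neq 0$, condition $(ii)$ forces $A=V$, ruling out any affine hyperplane containing the orbit. Finally, for $(iii)\Rightarrow(i)$, non-triviality is immediate since a non-zero $\pi(G)$-fixed vector would have singleton orbit lying in any affine hyperplane through it; for irreducibility, a proper non-zero $\pi(G)$-invariant subspace $W$ would be contained in a linear hyperplane $H$ (obtained by a Zorn's lemma argument on proper subspaces containing $W$), and any non-zero $v\in W$ would then have orbit inside $W\subseteq H$, contradicting $(iii)$. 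The only mildly delicate point is the fixed-point analysis in $(i)\Rightarrow(ii)$; the remaining implications are direct translations between the affine and linear pictures.
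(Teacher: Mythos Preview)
The paper does not supply its own proof of this lemma: it is introduced with the phrase ``the following general elementary lemma, that the reader can prove as an exercise,'' so there is nothing to compare your argument against.

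Your cycle $(i)\Rightarrow(ii)\Rightarrow(iii)\Rightarrow(i)$ is correct. The preliminary observation that for an invariant affine subspace $A=v_0+W$ the direction $W$ must itself be $\pi(G)$-invariant (since $\pi(g)W$ and $W$ are linear subspaces differing by a translate, hence equal) is the right starting point, and your handling of the fixed-point case in $(i)\Rightarrow(ii)$ is fine. In $(ii)\Rightarrow(iii)$ the affine hull argument is clean. In $(iii)\Rightarrow(i)$, your Zorn argument for embedding a proper subspace in a linear hyperplane is standard (fix $v_0\notin W$ and maximize among subspaces containing $W$ but avoiding $v_0$); and your non-triviality step, showing that a non-zero fixed vector has singleton orbit lying in some affine hyperplane through it, works since any non-zero vector space admits a non-zero linear functional. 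The argument is complete as written.
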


To prove Theorem \ref{main}, we make use of the Mackey machinery
describing irreducible representations of semidirect products with
an abelian normal subgroup, and some results about vanishing at
infinity of oscillatory integrals.

\begin{Thm}\label{HMforsemidir} Let $\K$ be a local field of characteristic 0, let $V$ be a finite-dimensional $\K$-vector space, and let $G$ be a second countable locally compact with a finite-dimensional representation $G\to\GL(V)$. If the $G$-orbit of every non-zero vector in the dual group $\widehat{V}$, is locally closed and not locally contained in some affine hyperplane, then the pair $(G\ltimes V,V)$ has the relative Howe-Moore property.
\end{Thm}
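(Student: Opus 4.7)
The plan is to combine the Mackey machine for the abelian normal subgroup $V$ with vanishing-at-infinity results for Fourier transforms of singular measures.

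By Corollary \ref{irredcor}, it suffices to show that every irreducible unitary representation $\sigma$ of $G\ltimes V$ which is non-trivial on $V$ has $\sigma|_V$ of class $C_0$. The local closedness of orbits and second countability imply the Mackey regularity condition on the action of $G$ on $\widehat V$, so $\sigma$ is unitarily equivalent to an induced representation of the form $\mathrm{Ind}_{G_\chi\ltimes V}^{G\ltimes V}(\tilde\rho\otimes\chi)$, where $\chi\in\widehat V\setminus\{0\}$, $G_\chi$ is its stabilizer in $G$, and $\rho$ is an irreducible representation of $G_\chi$.

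The second step is to read off $\sigma|_V$ from this description. Realizing $\sigma$ on $L^2(G/G_\chi,\mathcal H_\rho)$ with respect to a quasi-invariant measure, the action of $v\in V$ is diagonal:
$$(\sigma(v)\xi)(s)=\chi(s^{-1}vs)\,\xi(s)=(s\chi)(v)\,\xi(s).$$
Consequently, for $\xi\in L^2(G/G_\chi,\mathcal H_\rho)$ the matrix coefficient becomes
$$\langle\sigma(v)\xi\mid\xi\rangle=\int_{G/G_\chi}(s\chi)(v)\,\|\xi(s)\|^2\,ds=\int_O \eta(v)\,d\mu_\xi(\eta),$$
where $O=G\chi\subset\widehat V$ is the orbit and $\mu_\xi$ is the pushforward of $\|\xi(s)\|^2\,ds$ under $s\mapsto s\chi$. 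Thus $\mu_\xi$ is a finite positive measure carried by $O$, absolutely continuous with respect to the natural orbit measure. The $C_0$ property for $\sigma|_V$ therefore amounts to showing that $\widehat{\mu_\xi}(v)\to 0$ as $v\to\infty$ in $V$, for every such $\mu_\xi$.

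The final step, which is the main obstacle, is the Fourier vanishing. By a density argument (decomposing $\mu_\xi$ as a countable sum of measures, each supported on a relatively compact analytic chart of $O$ with smooth density), one reduces to the following: if $O_0$ is a relatively compact piece of an analytic submanifold of $\widehat V\cong\K^n$ not contained in any affine hyperplane, and $\nu$ is a smooth compactly supported measure on $O_0$, then $\widehat\nu(v)\to 0$ as $v\to\infty$. Over $\K=\R$ this is precisely the decay theorem for oscillatory integrals over submanifolds of full affine span, as in Stein \cite{Stein}; over $\K=\Q_p$ an analogous statement for $p$-adic analytic submanifolds is needed. The difficulty is to extract Fourier decay from the purely algebraic hypothesis ``not locally contained in an affine hyperplane''; this is handled by choosing a local analytic parametrization of the orbit and invoking the standard oscillatory-integral estimates, which constitute the technical heart of Section \ref{sec:4}.
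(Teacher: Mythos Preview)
Your sketch is correct and follows the same strategy as the paper: reduce to irreducibles via Corollary \ref{irredcor}, use the Mackey machine to express matrix coefficients on $V$ as Fourier transforms of finite measures supported on a single $G$-orbit in $\widehat V$, and then invoke oscillatory-integral decay. The paper's only refinements are to phrase Mackey via the projection-valued spectral measure (rather than the explicit induced-representation realization you use, though the resulting formula is the same), and to make the final step precise by introducing the notion of \emph{finite type} for the $\K$-analytic orbit---which is exactly the translation of ``not locally contained in an affine hyperplane''---and citing \cite{Clu} for the $p$-adic decay estimate alongside \cite{Stein}.
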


{\bf Proof:} Let $\pi$ be a unitary representation of $G\ltimes V$, without non-zero $V$-invariant vectors. We must prove that $\pi|_V$ is a $C_0$-representation. Thanks to Corollary \ref{irredcor}, we may assume that $\pi$ is irreducible.

Because our group $G\ltimes V$ is a semidirect product, with $V$ a finite-dimensional $\K$-vector space, we may appeal to the Mackey machine for
representations of semidirect products.

Consider the action of $G$ on $\widehat{V}$ given by $(g.\chi)(v)=\chi(g^{-1}.v)$ for all $g\in G$, $v\in V$ and $\chi
\in\widehat{V}$. We summarize in the following proposition some relevant facts from Mackey theory for semidirect products (see section 2.2 in \cite{Mackey}).

\begin{Prop}\label{mackey}
Let $(\HH,\rho)$ be a  unitary representation of $G\ltimes V$.

\begin{enumerate}
    \item There exists on $\widehat{V}$
    a projection-valued regular Borel measure
    $E: \mathrm{Borel}(\widehat{V}) \to \HH$ such that
    \begin{enumerate}
        \item $\rho(v) = \int_{\widehat{V}} \chi(v) dE(\chi)$, for all $v\in V$;
        \item $E(g.B)=\rho(g) E(B) \rho(g^{-1})$ for all $ g\in G$ and $B\in\mathrm{Borel}(\widehat{V})$.
    \end{enumerate}

    \item For $\xi\in\HH$ set $\mu_\xi = \left\langle  E(.)\xi, \xi\right\rangle $; this is a regular positive Borel measure
    such that $\left\langle \rho (v) \xi, \xi\right\rangle
        = \int_{\widehat{V}} \chi(v) d\mu_{\xi}(\chi)$, for all $v\in V$;

    \item There exists a regular Borel measure $\mu$ on
    $\widehat{V}$  satisfying the following statements:

    \begin{enumerate}
        \item for any $\xi\in \HH$, the measure $\mu_\xi$
        is absolutely continuous with respect to $\mu$.
        There exists an $L^2$-function $\widehat{\xi}: \widehat{V}\to \HH $
        such that
        $\Vert \widehat{\xi}(\chi)\Vert^2 =\dfrac{d\mu_\xi}{d\mu}(\chi)$
         is the Radon-Nikodym derivative of $\mu_\xi$
        with respect to $\mu$ and
        \begin{enumerate}
            \item  [i)] $\xi = \int_{\widehat{V}} \widehat{\xi}(\chi) d\mu(\chi)$;

            \item [ii)] $\left\langle \rho (v) \xi, \xi\right\rangle
            = \int_{\widehat{V}} \chi(v)\Vert \widehat{\xi}(\chi)\Vert^2 d\mu(\chi)$,
            for all $v\in V$;
        \end{enumerate}
        \item the measure $\mu$ is quasi-$G$-invariant; that is,
        for every measurable $A\subset \widehat{V}$,  $\mu(g.A) = 0$ if and only if
        $\mu(A) = 0$.
        \item If moreover the representation $\pi$ is irreducible,
        then the measure $\mu$ is $G$-ergodic.
        That is,  for any $G$-invariant
        measurable set $A$, we have either $\mu (A)=0$, or
        $\mu (\widehat{V}\setminus A)=0$ .
        \hfill$\square$
\end{enumerate}
\end{enumerate}
\end{Prop}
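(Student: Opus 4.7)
The plan is to assemble the statement piece by piece from standard operator-theoretic ingredients, since $V$ is a locally compact abelian group and everything is the Mackey/SNAG picture for a semidirect product with abelian normal subgroup.

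First I would produce the projection-valued measure $E$ in (1) by applying the Stone--Naimark--Ambrose--Godement theorem to the restriction $\rho|_V$: as $V$ is locally compact abelian, this restriction is unitarily equivalent to $\int_{\widehat V}^\oplus \chi\, dE(\chi)$ acting by $\chi \mapsto \chi(v)$, which is exactly (1)(a). The covariance (1)(b) then follows from a one-line computation using $\rho(g)\rho(v)\rho(g^{-1}) = \rho(g\cdot v)$: the two projection-valued measures $B\mapsto \rho(g)E(B)\rho(g^{-1})$ and $B\mapsto E(g\cdot B)$ have the same Fourier transform $\rho(g\cdot v)$, so they coincide by uniqueness in SNAG.

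For (2), I would simply set $\mu_\xi(B):=\langle E(B)\xi,\xi\rangle$; regularity and positivity are automatic, and the integral formula is obtained by pairing the spectral resolution against $\xi$. For (3), I would construct the dominating measure $\mu$ as a maximal spectral type: since $\HH$ is separable, pick a dense sequence $(\xi_n)$ in $\HH$, form $\mu=\sum_n 2^{-n}\mu_{\xi_n}/\|\xi_n\|^2$, and observe that every $\mu_\xi\ll\mu$ (a standard argument uses the cyclic subspaces generated by the $\xi_n$). The direct-integral picture $\xi=\int \widehat\xi(\chi)d\mu(\chi)$ with $\|\widehat\xi(\chi)\|^2=d\mu_\xi/d\mu$ is then obtained from the general decomposition theorem for abelian von Neumann algebras applied to $E$.

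For quasi-invariance in (3)(b), I would use covariance: if $\mu(A)=0$, then $\mu_{\xi_n}(A)=0$ for all $n$, hence $E(A)\xi_n=0$ and so $E(A)=0$ by density; applying (1)(b) yields $E(g\cdot A)=\rho(g)E(A)\rho(g)^{-1}=0$, whence $\mu(g\cdot A)=0$, and symmetry under $g\mapsto g^{-1}$ gives the equivalence. Finally, for ergodicity (3)(c) under irreducibility, if $A\subset \widehat V$ is $G$-invariant modulo $\mu$, then $E(A)$ commutes with all $\rho(v)$ (by the spectral theorem applied within the image of $E$) and with all $\rho(g)$ (by (1)(b) together with invariance of $A$); thus $E(A)$ lies in the commutant of $\rho(G\ltimes V)$, and Schur's lemma forces $E(A)\in\{0,I\}$, which in turn forces $\mu(A)=0$ or $\mu(\widehat V\setminus A)=0$. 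The only subtle point—and the main obstacle when writing this out carefully rather than invoking Mackey by name—is ensuring that the dominating measure $\mu$ is regular, and that the measurable field $\chi\mapsto\widehat\xi(\chi)$ can genuinely be chosen jointly measurable; this is handled by the separability of $\HH$ and the Polish structure of $\widehat V$, which allow the standard direct-integral construction to be carried out.
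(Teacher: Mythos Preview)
Your proof sketch is correct and follows the standard route (SNAG for (1), spectral measures for (2), maximal spectral type plus covariance and Schur for (3)). However, the paper does not actually prove this proposition at all: it is introduced with the words ``We summarize in the following proposition some relevant facts from Mackey theory for semidirect products (see section 2.2 in \cite{Mackey})'' and closed with a bare $\square$. So there is nothing to compare against---the paper treats the statement as a black-box citation of Mackey, whereas you have supplied the underlying argument. What you have written is essentially the content one would find by unpacking that reference.
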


Let $(\pi,\HH)$ be an irreducible unitary representation of
 $G\ltimes V$, without non-zero $V$-invariant vector. Denote by $\mu$ the quasi-$G$-invariant
 $G$-ergodic measure given by Proposition \ref{mackey}.
 By Proposition \ref{mackey} (3.a(ii)), it is enough to prove that,
 for any positive function $f$ in $L^{1}(\mu)$,
\begin{equation}\label{mixing}
\lim_{| v| \to\infty} \int_{\widehat{V}} \chi(v)f(\chi)\,
\mathrm{d}\mu(\chi) = 0;
\end{equation}
equivalently, we must establish the decay at infinity of the Fourier transform of the possibly singular measure $f\,d\mu$ (here ``singular'' means with respect to Lebesgue measure on $\widehat{V}$).

Since the $G$-orbits on $\widehat{V}$ are assumed to be locally closed , any $G$-ergodic measure $\mu$ on
$\widehat{V}$ is concentrated on a single $G$-orbit
$\mathcal{O}=G.\chi$ for some $\chi\in \widehat{V}$ (\cite{Zim},
Proposition 2.1.10); note that $\chi\neq 0$ as $\pi$ has no $V$-invariant vector. Since any two quasi-invariant measures on
$\mathcal{O}$ are equivalent (see e.g. \cite{BHV}, Theorem B.1.7), to prove (\ref{mixing}) we may replace the quasi-invariant measure $\mu$ by Lebesgue measure $\sigma$ on $\mathcal{O}$.

Denote by $v\cdot w=\sum_{i=1}^n v_iw_i$ the standard scalar product on $\K^n$. Let $M$ be a smooth $d$-dimensional sub-manifold of $\K^n$, locally given by a parametrization $\phi:U\rightarrow \K^n$, where $U$ is a neighborhood of $0$ in $\K^d$. Fix a point $x_0\in U$; we say that $M$ has finite type at $\phi(x_0)\in M$ if, for every non-zero vector $\eta\in\K^n$, the function $x\mapsto(\phi(x)-\phi(x_0))\cdot\eta$ does not vanish to infinite order at $x=x_0$. The {\it type} of $M$ at $x_0$ is then the smallest $k\geq 1$ such that, for every non-zero vector $\eta\in\K^n$, there exists a multi-index $\alpha$ with $1\leq|\alpha|\leq k$, such that $\partial^\alpha_x(\phi(x)\cdot\eta)\neq 0$ at $x=x_0$. Say that $M$ has {\it finite type} if the supremum $k$ of the types at each point of $M$, is finite. If $M$ has finite type $k$, then for any $C^\infty$-function $g$ with compact support in $M$, we have
\begin{equation}\label{TF}
\lim_{v\in\K^n,|v|\rightarrow \infty}\int_{M}\lambda_0(v\cdot x)g(x)d\sigma(x)=O(|v|^{-1/k})
\end{equation}
(where $\lambda_0$ is a non-trivial character on $\K$; if $\K$ is non-archimedean, assume that $\lambda_0$ is non-trivial on the valuation ring $R$ of $\K$, but trivial on the maximal ideal of $R$): for a proof of (\ref{TF}), see \cite{Stein}, Theorem 2 in Chapter VIII in the case where $\K$ is archimedean, and \cite{Clu}, Theorem 3.11 in the case where $\K$ is non-archimedean of characteristic 0.

Assume we know that our orbit $\mathcal{O}$ is a sub-manifold of finite type in $\widehat{V}$. Since every character of $\widehat{V}$ can be written as $\chi\mapsto \lambda_0(v\cdot\chi)$ for some $v\in V$ (see e.g. Theorem 3 in Chapter II in \cite{Wei}), we see that (\ref{TF}) implies (\ref{mixing}), using density of $C^\infty$-functions with compact support in $L^1$-functions.

So it remains to show that $\mathcal{O}$ has finite type. By homogeneity, it is enough to prove that it has finite type at every point. But since $\mathcal{O}$ is a $\K$-analytic sub-manifold, having infinite type at $\chi\in\mathcal{O}$ would mean that $\mathcal{O}$ is locally contained in some affine hyperplane $H$, contrary to our assumptions. This completes the proof.
\hfill$\square$
\medskip

\begin{Prop}\label{orblocc}
Let $G$ be a connected Lie group acting irreducibly non-trivially on a finite-dimensional real vector space $V$. Then the orbits of $G$ on $V$ are locally closed and not locally contained in any affine hyperplane.
\end{Prop}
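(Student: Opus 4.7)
The plan is to handle the two conclusions separately.

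For the non-containment in an affine hyperplane: suppose for contradiction that, for some non-zero $v \in V$, the orbit $Gv$ is locally contained in a hyperplane $\{x : \ell(x) = c\}$ with $\ell \in V^\ast \setminus\{0\}$. The orbit map $g \mapsto gv$ is real-analytic and $G$ is connected, so $\ell(gv) = c$ on an open subset of $G$ forces $\ell(gv) = c$ on all of $G$. Differentiating $\ell(g\exp(tX)v) = c$ at $t = 0$ yields $\ell(gXv) = 0$ for every $g \in G$ and $X \in \g$. Hence the linear span $W$ of $\{gXv : g \in G,\, X \in \g\}$ lies in $\ker\ell \subsetneq V$; since $W$ is visibly $G$-invariant, irreducibility forces $W = 0$, whence $\g v = 0$, and by connectedness of $G$, $v$ is $G$-fixed. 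Irreducibility then forces $V = \R v$, on which the connected group $G$ acts by a continuous character into $\R_{+}^\ast$; non-triviality of the action contradicts $v$ being fixed.

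For local closedness, I would pass to the real Zariski closure $\mathbf G \subseteq \GL(V)$ of $\rho(G)$. Since the subgroup of $\GL(V)$ preserving a given subspace of $V$ is defined by polynomial equations, every $\rho(G)$-invariant subspace is also $\mathbf G$-invariant, so $\mathbf G$ still acts irreducibly on $V$. Chevalley's theorem on real algebraic group actions then yields that the $\mathbf G(\R)$-orbits on $V$ are locally closed.

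The principal obstacle is to descend local closedness from $\mathbf G(\R)$- to $G$-orbits, as $G$ can be a proper connected Lie subgroup of $\mathbf G(\R)^\circ$---for instance, $\exp(\R(I+J)) \subseteq \GL_2(\R)$, whose Zariski closure is the two-dimensional group $\C^\ast$. My plan is to show each $G$-orbit is closed in the enclosing $\mathbf G(\R)$-orbit. The normalizer of $\g$ in $\mathbf G$ is Zariski-closed and contains $\rho(G)$, hence equals $\mathbf G$, so $\g$ is an ideal of $\textnormal{Lie}(\mathbf G)$ and $G$ is normal in $\mathbf G(\R)^\circ$. By Schur's lemma applied to the irreducible $G$-action, the centralizer of $\rho(G)$ in $\GL(V)$ is the multiplicative group of a division algebra $D \in \{\R, \C, \mathbb H\}$; combining this with the normality of $G$ in $\mathbf G(\R)^\circ$ should allow one to exhibit the $\mathbf G(\R)^\circ$-orbit of $v$ as fibred by $G$-orbits over a Hausdorff quotient essentially modelled on a subset of $D^\ast$-scalings of $v$, yielding closedness of $G$-orbits inside $\mathbf G(\R)^\circ$-orbits, and hence local closedness in $V$.
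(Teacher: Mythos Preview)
Your argument for the hyperplane statement is correct and is essentially the paper's argument with Lemma~\ref{GHV} unpacked: the paper invokes analyticity of the orbit map to pass from local to global containment in an affine hyperplane, and then quotes Lemma~\ref{GHV}(iii)$\Rightarrow$(i) to get a contradiction; your differentiation argument is a direct proof of that implication.

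For local closedness, your strategy matches the paper's in spirit---pass to an algebraic group whose orbits are locally closed by Chevalley's theorem, then descend to $G$-orbits---but the final paragraph is where the substance lies and you have only a sketch (``should allow one to exhibit\dots''). The paper carries this out differently and more concretely. Rather than working with the Zariski closure $\mathbf G$ and the full commutant $D$, the paper first observes that a connected group acting faithfully and irreducibly is \emph{reductive}, writes $G=SZ$ with $S$ semisimple and $Z$ the connected center, and lets $\K$ be the (commutative) subfield of $\textnormal{End}(V)$ generated by $Z$, so $\K\in\{\R,\C\}$---note this is not your $D$, which could be $\mathbb H$. If $\K=\R$ then $G$ has finite index in its Zariski closure and one is done. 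If $\K=\C$, the paper uses $\C^*S$ (not the Zariski closure of $G$) as the algebraic group with locally closed orbits, and then runs a sequence argument: given $z_ns_nx\to y=\lambda sx$, one passes to the projective space $V/\C^*$ where $S$-orbits are locally closed, writes $s^{-1}s_n=\eps_n\ell_n$ with $\eps_n\to 1$ and $\ell_n$ in the stabilizer $L$ of $[x]$, and reduces to showing that $LZ$-orbits in the single line $\C^*x$ are closed---which holds because any Lie subgroup of $\C^*$ is closed.

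Your fibration picture over $D^*$-scalings is morally the same endgame, but making it precise requires exactly this kind of two-step reduction (projective quotient, then a one-dimensional closedness statement), and the fact that it works relies on connected Lie subgroups of $\C^*\cong\R\times S^1$ being automatically closed. That is the missing idea in your sketch; without it, ``Hausdorff quotient'' is not justified, since $G$ need not a priori be closed in $\mathbf G(\R)^\circ$.
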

\begin{proof}
Let us first prove that the orbits are not locally contain in any hyperplane. We identify $G$ with its image into $\GL(V)$, not assuming that it is closed. If a nonzero orbit is locally contained in an affine hyperplane $A$, at some vector $v\in V$, as the orbit map $G\rightarrow V:g\mapsto gv$ is real analytic, we see that the orbit $Gv$ is contained in $A$. By Lemma \ref{GHV}, this forces $v=0$.

%then there exists a neighbourhood $U$ of $1$ in $G$ and a hyperplane $H$ such that for all $g\in U$, $gv-v\in H$. This condition is Zariski-closed on $U$, but on the other hand $G_0$ is contained in the Zariski closure of any of its non-empty open subsets. Therefore $gv-v\in H$ for all $g\in G_0$. The linear subspace generated by $gv-v$ for $g\in G_0$ is a $G_0$-invariant proper subspace, so is zero; thus $v$ is a $G_0$-fixed point, contradicting that the dimension of $V$ is at least two.

Let us now prove the first assertion.
We can suppose that $G$ acts faithfully. Since $G$ acts irreducibly on $V$, it is reductive. Write $G=SZ$, with $S$ semisimple and $Z$ central, both connected. Let $\K$ be the $\R$-subalgebra of $\textnormal{End}(V)$ generated by $Z$. Since the action is irreducible, $\K$ is a field, so is either $\R$ or isomorphic to $\C$. In case $\K=\R$, the group $G$ maps with finite index into its Zariski closure in $\GL(V)$, so its orbits are locally closed \cite[Theorem~3.1.3]{Zim}. So we can suppose $\K=\C$, and thus view $V$ as a $\C$-vector space, $Z$ acting by scalar multiplication.

{\bf Claim:} Fix $x\in V$. Let $L$ be the global stabilizer of $\C x$ in $S$. (So the global stabilizer of $\C x$ in $G$ is $LZ$). Then we claim that the orbits of $LZ$ in $\C^*x$ are closed. This is because $L$ is real-Zariski-closed in $S$, so it has finitely many components, so $LZ$ as well. Now any Lie subgroup of $\C^*$ being closed, the image of the mapping of $LZ$ into $\C^*$ is closed.

Let us now prove the desired
assertion. We know that $\C^*S$ has finite index in its real Zariski closure, so its orbits on $V$ are locally closed \cite[Theorem~3.1.3]{Zim}. Take $x\in V$; we can suppose $x\neq 0$. Then there exists a neighbourhood $\Omega$ of $x$ such that $\C^*Sx\cap\Omega$ is closed in $\Omega$. Let us show that $ZSx\cap\Omega$ is closed in $\Omega$. Pick a sequence $(z_ns_nx)$ converging to $y\in\Omega$, with $z_n\in Z$, $s_n\in S$. So there exists $\lambda\in\C^*$ and $s\in S$ such that $y=\lambda s x$. So setting $\sigma_n=s^{-1}s_n \in S$, we have
$$\lim_{n\to\infty}\sigma_n(z_n\lambda^{-1})x=x.$$
In particular, in the complex projective space $(\sigma_n[x])$ tends to $[x]$ (denoting by $[x]$ the class of $x$ modulo $\C^*$). Since the stabilizer of $[x]$ is $L$, and since the orbits of $S$ in the projective space are locally closed, we can write $\sigma_n=\eps_n\ell_n$, where $\eps_n\in S$, $\eps_n\to 1$, and $\ell_n\in L$. By continuity of the action, we obtain $(z_n\lambda^{-1})\ell_n x\to x$, that is
$$\lim_{n\to\infty} z_n\ell_n x = \lambda x.$$
By the Claim, $ZLx$ is closed in $\C^*x$. Therefore $\lambda x\in ZLx$.
\end{proof}

\begin{Thm} Let $G$ be a Lie group and $V$ a vector group which is closed and normal in $G$. Let $G_0$ be its connected component of identity. The following are equivalent:
\begin{enumerate}
\item[(i)] The pair $(G,V)$ has the relative Howe-Moore property;
\item[(ii)] The pair $(G_0,V)$ has the relative Howe-Moore property;
\item[(iii)] $G_0$ acts irreducibly and non-trivially on $V$.
\end{enumerate}\label{semilie}\end{Thm}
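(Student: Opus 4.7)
The equivalence $(i)\Leftrightarrow(ii)$ will follow directly from Proposition~\ref{perm1}: since $G$ is a Lie group, $G_0$ is open in $G$, so part (ii) gives $(i)\Rightarrow(ii)$, and since $V\subseteq G_0\subseteq G$, part (i) gives $(ii)\Rightarrow(i)$. For $(ii)\Rightarrow(iii)$, I plan to apply Theorem~\ref{structurerelHM} directly to the pair $(G_0,V)$. The vector group $V\cong\R^n$ is compactly generated and not locally elliptic (ruling out the first alternative of the theorem), while $W(V)=\{0\}$ since $\R^n$ has no nontrivial compact subgroup. Among the compactly generated sub-cases of option (2), case (b) is excluded because $V$ is neither discrete nor topologically simple, leaving case (a), which asserts exactly that $G_0$ acts irreducibly and non-trivially on $V$.

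The main content is the direction $(iii)\Rightarrow(ii)$. My plan is to reduce it to Theorem~\ref{HMforsemidir} by a pullback trick, since $V$ need not split off as a semidirect factor of $G_0$. Form the semidirect product $\tilde G:=G_0\ltimes V$ with $G_0$ acting on $V$ by conjugation, and consider the multiplication map
\[
\phi:\tilde G\to G_0,\qquad (g,v)\mapsto gv.
\]
A short computation using normality of $V$ in $G_0$ shows that $\phi$ is a continuous surjective homomorphism, and its restriction to the second factor $\{1\}\times V\subset\tilde G$ is the canonical identification with $V\subseteq G_0$. Pulling back any unitary representation $\pi$ of $G_0$ to $\pi\circ\phi$ on $\tilde G$ therefore identifies both the $V$-restrictions of the two representations and their spaces of $V$-invariant vectors. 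Consequently, the relative Howe-Moore property for $(\tilde G,\{1\}\times V)$ will transfer to $(G_0,V)$.

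It then remains to verify the hypotheses of Theorem~\ref{HMforsemidir} for $\tilde G$. The action of $G_0$ on the Pontryagin dual $\widehat V$ (identified with the real dual $V^*$ via the usual pairing) is the contragredient of the action on $V$, hence still irreducible and non-trivial, since closed invariant subspaces correspond to their annihilators. Proposition~\ref{orblocc}, applied to the connected Lie group $G_0$ acting irreducibly and non-trivially on the finite-dimensional real vector space $\widehat V$, yields exactly the orbit condition required by Theorem~\ref{HMforsemidir}: the $G_0$-orbit of every non-zero vector of $\widehat V$ is locally closed and not locally contained in any affine hyperplane. Theorem~\ref{HMforsemidir} then delivers the relative Howe-Moore property for $(\tilde G,V)$, and hence for $(G_0,V)$ via the pullback above.

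The main obstacle lies in the $(iii)\Rightarrow(ii)$ step: since $V$ is only normal, not a semidirect factor, in $G_0$, one cannot directly invoke Theorem~\ref{HMforsemidir}. The multiplication homomorphism $\phi:G_0\ltimes V\to G_0$ is precisely the device that recasts the abstract hypothesis on $G_0$ into one on a bona fide semidirect product, after which the geometric input from Proposition~\ref{orblocc} combines with the Mackey and oscillatory-integral machinery packaged in Theorem~\ref{HMforsemidir} to close the argument.
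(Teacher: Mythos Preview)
Your proof is correct and follows essentially the same route as the paper. The only differences are cosmetic: the paper proves $(iii)\Rightarrow(i)$ rather than $(iii)\Rightarrow(ii)$, and it phrases the descent from the semidirect product as an application of Proposition~\ref{perm2} (quotienting $G\ltimes V$ by the antidiagonal copy of $V$) instead of your explicit pullback along the multiplication map---these are the same argument. For $(ii)\Rightarrow(iii)$ the paper invokes Propositions~\ref{perm2} and~\ref{relHM} directly, while you route through the heavier Theorem~\ref{structurerelHM}; both are valid.
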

\begin{proof}~

\begin{itemize}\item $(i)\Rightarrow (ii)$. Since $G_0$ is open in $G$ for the Hausdorff topology, Proposition \ref{perm1} (ii) applies.

\item $(ii)\Rightarrow(iii)$ Follows from Propositions \ref{perm2} and \ref{relHM}.

\item $(iii)\Rightarrow (i)$. Let us show that $(G\ltimes V,V\times V)$ has the relative Howe-Moore Property, the result following then from Proposition \ref{perm2}.

Note that the $G_0$-representation on $\widehat{V}$ is irreducible and non-trivial. Therefore, by Proposition \ref{orblocc}, $G$-orbits on $\widehat{V}$ are locally closed and, except $\{0\}$, not locally contained in any affine hyperplane.
The result then follows from Theorem \ref{HMforsemidir}.\qedhere
\end{itemize}
\end{proof}

\begin{Thm} Let $G$ be a locally compact totally disconnected group with a continuous representation in $\GL(V)$, where $V$ is a finite-dimensional vector space over $\Q_p$.
The following are equivalent:
\begin{enumerate}
\item[(i)] The pair $(G\ltimes V,V)$ has the relative Howe-Moore property;
\item[(ii)] every open subgroup of $G$ acts irreducibly and non-trivially on $V$.
\end{enumerate}\label{semipad}\end{Thm}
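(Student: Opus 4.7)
The plan mirrors the proof of Theorem~\ref{semilie}: one direction is routine permanence, the other verifies the hypothesis of Theorem~\ref{HMforsemidir}, with an extra twist to handle the fact that $G$ need not be closed in $\GL(V)$.

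For $(i)\Rightarrow(ii)$, given an open subgroup $H\subseteq G$, the subgroup $H\ltimes V$ is open in $G\ltimes V$, so Proposition~\ref{perm1}(ii) transfers the relative Howe-Moore property to $(H\ltimes V,V)$. Proposition~\ref{perm2} then forces every proper closed $H$-invariant $\Q_p$-subspace of $V$ to be compact; since $V$ has no non-trivial compact $\Q_p$-subspace, $H$ acts irreducibly. Non-triviality is immediate from Proposition~\ref{relHM}, which forbids $H$ from fixing every character of $V$.

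For $(ii)\Rightarrow(i)$, the ``not locally contained in an affine hyperplane'' hypothesis of Theorem~\ref{HMforsemidir} is handled as follows: given $\chi\in\widehat V\setminus\{0\}$ and a neighborhood $U$, total disconnectedness of $G$ supplies a compact open subgroup $K$ with $K\chi\subseteq U$, and since $K$ acts irreducibly and non-trivially on $V$ (and hence on $\widehat V$ by duality), Lemma~\ref{GHV}(iii) forces $K\chi\subseteq G\chi\cap U$ to escape every affine hyperplane. Local closedness of $G$-orbits, however, can genuinely fail (e.g., for $G=\Z$ acting on $\Q_p$ by $n\mapsto 2^n$), so I would bypass it by arguing at the level of the Mackey measure.

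Specifically, Corollary~\ref{irredcor} reduces the problem to an irreducible representation $\pi$ non-trivial on $V$; Proposition~\ref{mackey} then supplies a $G$-quasi-invariant, $G$-ergodic Borel measure $\mu$ on $\widehat V$, and it suffices to establish the Fourier decay~(\ref{mixing}). I would replace $G$ by the closure $\bar G$ of its image in $\GL(V)$: as a closed subgroup of $\GL_n(\Q_p)$ this is a $p$-adic Lie group, and its orbits on $\widehat V$ are $p$-adic analytic submanifolds, in particular locally closed Borel sets. Hypothesis~(ii) transfers to $\bar G$ because linear subspaces of $V$ are automatically closed, so their invariance under the dense image of $G$ extends to invariance under $\bar G$. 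Each $\bar G$-orbit is then a $G$-invariant Borel set of $\mu$-measure $0$ or $1$ by $G$-ergodicity; local closedness makes the $\bar G$-orbit equivalence relation smooth, so the pushforward of $\mu$ to the standard Borel orbit space $\widehat V/\bar G$ is a $\{0,1\}$-valued probability measure, hence a Dirac mass. Consequently $\mu$ concentrates on a single non-zero $\bar G$-orbit $\mathcal O$, which by the transferred hypothesis together with Lemma~\ref{GHV} is not locally contained in any affine hyperplane; it is therefore a submanifold of finite type, and the Fourier decay~(\ref{TF}) finishes the proof exactly as in Theorem~\ref{HMforsemidir}. The step I expect to require the most care is this concentration of $\mu$ on a single $\bar G$-orbit, which relies on the standard but non-trivial local closedness of $\bar G$-orbits and the consequent smoothness of the orbit equivalence relation.
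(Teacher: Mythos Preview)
Your $(i)\Rightarrow(ii)$ matches the paper's argument (and is in fact slightly more explicit: the paper cites only Propositions~\ref{perm1}(ii) and~\ref{relHM}, but irreducibility really comes from Proposition~\ref{perm2}, as you note).

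For $(ii)\Rightarrow(i)$ you take a genuinely different and harder route. You correctly observe that $G$-orbits on $\widehat V$ need not be locally closed, and propose to repair this by passing to the closure $\bar G\subseteq\GL(V)$ and arguing that the $G$-ergodic Mackey measure concentrates on a single $\bar G$-orbit. The weak point is exactly where you place it: the assertion that $\bar G$-orbits are locally closed is not as ``standard'' as you suggest. Your $\bar G$ is a $p$-adic Lie group, but not in general algebraic, and local closedness of orbits for arbitrary analytic $p$-adic group actions is a delicate matter (the analogue of the real case handled in Proposition~\ref{orblocc} would need a separate argument). This is a real obligation you have flagged but not discharged.

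The paper sidesteps the whole issue with a trick you nearly have in hand. Rather than analyzing $G$- or $\bar G$-orbits, it reduces at the outset to a \emph{compact} open subgroup $K\leq G$: by Proposition~\ref{perm1}(i), if $(K\ltimes V,V)$ has the relative Howe-Moore property then so does $(G\ltimes V,V)$. For compact $K$ the orbits on $\widehat V$ are compact, hence closed, so the local-closedness hypothesis of Theorem~\ref{HMforsemidir} is trivially satisfied. The hyperplane condition is then handled by exactly your argument---if a $K$-orbit were locally contained in an affine hyperplane, a finite-index open subgroup $K'\leq K$ would have an orbit entirely inside one, contradicting irreducibility of the $K'$-action via Lemma~\ref{GHV}. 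No $\bar G$, no bypass through the Mackey measure, no smoothness of equivalence relations.

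In short: your strategy may well be completable, but the paper turns the local-closedness obstacle into a non-issue by exploiting Proposition~\ref{perm1}(i) in the upward direction---precisely the permanence principle you used only downward (via part~(ii)) in proving the other implication.
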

\begin{proof}
Suppose (i). Then it follows from Propositions \ref{perm1}(ii) and \ref{relHM} that the action of $G$ on $V$ is irreducible and non-trivial in restriction to any open subgroup.

Conversely suppose (ii). Let $K$ be an open compact subgroup in $G$. It is enough to check that $(K\ltimes V,V)$ has the relative Howe-Moore Property. Since the orbits of $K$ in $\widehat{V}$ are obviously closed, we just need, to apply Theorem \ref{HMforsemidir}, to check that orbits are not locally contained in any affine hyperplane. If this were the case, upon replacing $K$ by some finite index subgroup, we would obtain an orbit entirely contained in an affine hyperplane. By Lemma \ref{GHV}, this is not compatible with the fact that the action of $K$ is irreducible and non-trivial.
\end{proof}

\section{The relative Howe-Moore property for Lie groups}\label{sec:5}

The purpose of this section is to prove Theorem \ref{structureLIEHM}, which is the archimedean part of Theorem \ref{main}.
By a {\it Lie group} we mean a real Lie group, without any connectedness assumption.

\begin{Prop}\label{LIEcar} Let $G$ be a non-compact Lie group. Suppose that every proper characteristic subgroup of $G$ is compact. Then there exists a characteristic compact normal subgroup $K$ such that one of the following cases occurs:
\begin{itemize}
\item[(a)] $K$ is a central torus and $G/K$ is isomorphic to $\R^n$ for some $n\geq 1$;
\item[(b)] $G$ is connected reductive with dense isotypic non-compact Levi factor; $K$ is a central subgroup.
\item[(c)] $K=G_0$. %(so $G/G_0$ is discrete).
\end{itemize}
\end{Prop}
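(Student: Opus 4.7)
My plan is to peel off the identity component first, then, for $G$ connected, dichotomize on whether the solvable radical is all of $G$. Since $G_0$ is a closed characteristic subgroup, the hypothesis forces $G_0$ to be compact whenever $G_0\neq G$, giving case (c) with $K=G_0$. I therefore assume $G$ is connected and non-compact. The solvable radical $R$ is closed, connected and characteristic: if $R=G$ (the solvable case) I aim for case (a); otherwise $R$ is a proper characteristic subgroup, hence compact (possibly trivial), and I aim for case (b).

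In the solvable case, Mostow's structure theorem produces a unique maximal compact subgroup $T$ of $G$, necessarily a torus. Uniqueness makes $T$ characteristic; discreteness of $\Aut(T)$ combined with connectedness of $G$ forces $T$ to be central; and $G$ being non-compact makes $T$ proper. It remains to show that $G/T$ is \emph{abelian}, so that $G/T\simeq\R^n$ as a topological group (a priori $G/T$ could be non-abelian simply-connected solvable, like the real affine group). I would argue by contradiction: if $G/T$ were non-abelian, then $[G/T,G/T]$, being an analytic subgroup of a simply-connected solvable Lie group, is closed, non-trivial, and preserved by every automorphism of $G/T$. Its preimage in $G$ is then a proper closed characteristic subgroup of $G$, so compact by hypothesis; but its image in $G/T$ is then a non-trivial compact subgroup, contradicting the maximality of $T$. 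Hence $G/T$ is abelian and simply-connected, so isomorphic to $\R^n$ with $n\geq 1$, and $K:=T$ realises case (a).

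In the non-solvable case, $R$ is compact, connected and solvable, hence a torus; discreteness of $\Aut(R)$ plus connectedness of $G$ give $R\subset Z(G)$, so $G$ is reductive. The centre $Z(G)$ is characteristic and, since $G$ is non-abelian, proper; by hypothesis it is compact, so I set $K:=Z(G)$. It remains to analyse the Levi structure. Let $\mathfrak{s}=[\mathfrak{g},\mathfrak{g}]$ be the characteristic semisimple ideal and $S$ its analytic subgroup; the closure $\overline{S}$ is closed and characteristic. Were $\overline{S}$ proper, it would be compact, so $\mathfrak{s}$ (a semisimple subalgebra of a compact Lie algebra) would be of compact type, $S$ compact, and $G=R\overline{S}$ compact, a contradiction; hence $\overline{S}=G$ and the Levi is dense. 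The same technique handles isotypy and non-compactness of the simple factors of $\mathfrak{s}$: for any non-trivial $\Aut(\mathfrak{g})$-invariant splitting $\mathfrak{s}=\mathfrak{s}'\oplus\mathfrak{s}''$ (arising either from grouping simple factors by isomorphism type, or from separating compact and non-compact factors), the closure $\overline{S'}$ of the analytic subgroup of $\mathfrak{s}'$ is characteristic and, as one sees by computing its preimage in the universal cover (the projection of $\pi_1(G)$ to the $\mathfrak{s}''$-factor lies in its discrete centre and cannot fill it out), strictly smaller than $G$; it is therefore compact by hypothesis, forcing $\mathfrak{s}'$ to be of compact type, and by symmetry $\mathfrak{s}''$ too, making $G$ compact, a contradiction.

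The main obstacle, in my view, is the solvable case: characteristic subgroups of $G/T$ do not automatically come from characteristic subgroups of $G$, so one cannot naively apply the hypothesis to $G/T$. The workaround is to use the fact that $[G/T,G/T]$ is a \emph{fully} invariant subgroup of $G/T$, hence preserved by every automorphism of $G$ that descends to $G/T$, and then to transfer the hypothesis back to $G$ via preimages rather than trying to work inside $G/T$.
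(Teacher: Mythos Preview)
Your argument is correct and follows the same architecture as the paper's: peel off $G_0$, then for connected $G$ dichotomize on whether the solvable radical is compact (reductive case) or all of $G$ (solvable case), and in the reductive case use that $\overline{S}$ and the isotypic pieces are characteristic.

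Two remarks. In the solvable case the paper is more direct: $\overline{[G,G]}$ is a proper closed characteristic subgroup of $G$ itself (proper because $G$ is solvable), hence compact by hypothesis; being connected solvable and compact it is a torus, so the maximal torus $K\supset\overline{[G,G]}$ already has $G/K$ abelian, hence $\simeq\R^n$. This avoids the passage to $G/T$ entirely. Relatedly, your ``main obstacle'' is not one: since $T$ is characteristic in $G$, every topological automorphism of $G$ descends to $G/T$, so the preimage in $G$ of any characteristic subgroup of $G/T$ is automatically characteristic in $G$. Your workaround is therefore unnecessary (though correct). In the reductive case you supply more detail than the paper's ``clearly isotypic''; note however that your ``by symmetry'' does not apply to the compact/non-compact splitting, where the two sides are asymmetric---but once isotypy is established, all simple factors are isomorphic, so either all compact (forcing $G=\overline{S}$ compact, contradiction) or all non-compact.
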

\begin{proof}
Since $G_0$ is closed characteristic, either $G_0$ is compact, or $G$ is connected, as we now suppose. Let $R(G)$ be the solvable radical of $G$. If $R(G)$ is compact, then it is a torus, central by connectedness of $G$. So $G$ is reductive, with a Levi factor $S$ (maybe not closed) so that $G$ is locally isomorphic to $S\times R(G)$. Then $S$ is not compact, so it is dense, and $S$ clearly has to be isotypic.

Otherwise $G$ is solvable; this implies that $\overline{[G,G]}$ is compact, so this is a torus. Therefore if $K$ denotes the maximal torus in $G$, $K$ is normal and $G/K$ is a vector group.
\end{proof}

\begin{Thm}\label{structureLIEHM} Let $G$ be a Lie group having a faithful continuous real finite-dimensional representation, and let $N$ be a closed, non-compact normal subgroup of $G$ such that $(G,N)$ has relative Howe-Moore property. Then $(G,N)$ has the relative Howe-Moore Property if and only if one of the following cases occurs:
\begin{itemize}
\item[(a)] $N$ is isomorphic to $\R^n$ for some $n\ge 1$, and the representation of $G$ on $N$ is irreducible and non-trivial;
\item[(b)] $N$ is a connected non-compact simple (linear) Lie group.
\end{itemize}
\end{Thm}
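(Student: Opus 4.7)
Sufficiency is immediate. In case (a), Theorem~\ref{semilie} applies directly since, as noted in the remark after Theorem~\ref{main}, in the real setting irreducibility of the $G$-action on $N\simeq \R^n$ is equivalent to irreducibility of the $G_0$-action. In case (b), the classical Howe--Moore theorem cited in the introduction gives that a connected non-compact simple linear Lie group has the Howe--Moore property, so $(N,N)$ has relative Howe--Moore, and Proposition~\ref{perm1}(i) promotes this to $(G,N)$.

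For the necessity, assume $(G,N)$ has the relative Howe--Moore property. Then $N$ is itself a linear Lie group, and since every closed characteristic subgroup of $N$ is normal in $G$, Proposition~\ref{perm2} forces every proper such subgroup to be compact. I would then invoke Proposition~\ref{LIEcar} applied to $N$, obtaining a characteristic compact normal subgroup $K$ falling into one of three cases: (a) $K$ is a central torus and $N/K\simeq \R^n$; (b) $N$ is connected reductive with central $K$ and dense isotypic non-compact semisimple Levi factor $S$; (c) $K=N_0$ is compact.

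In case (a), $N$ is abelian and compactly generated, hence isomorphic to $T^a\times \R^n$ with $T^a=K$; Theorem~\ref{structurerelHM}(2a) supplies that $G$ acts irreducibly and non-trivially on $N/K$, while the $G$-action on $K$ factors through the discrete group $\Aut(K)$, so $G_0$ centralizes $K$. To force $a=0$, I would proceed in the spirit of Theorem~\ref{HMforsemidir}: for $a\geq 1$, a non-trivial character $\chi$ of $K$ and its $G$-orbit in $\hat N\simeq \Z^a\oplus \R^n$ would yield an induced representation of $G$ whose $N$-restriction either admits a $G$-fixed non-trivial extension of $\chi$ (contradicting Proposition~\ref{relHM} via the $G$-irreducibility of the action on $\widehat{N/K}$) or else produces a spectral measure whose Fourier transform fails to decay at infinity along the $\R^n$-direction of $N$, contradicting RHM. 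Case (b) is handled analogously: one first argues that $G$ permutes the (finitely many) simple factors of $S$ continuously, and the openness of the kernel combined with Proposition~\ref{perm1}(ii) and Proposition~\ref{perm2} forces $S$ to be simple; the character argument of case (a), applied to the abelianization $N/[N,N]$ (which is controlled by $K$), then forces $K\subset S$ and hence $N=S$, a connected non-compact simple linear Lie group.

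Case (c) is eliminated by combining structural and arithmetic inputs. Applying Proposition~\ref{perm1}(ii) to the open subgroup $G_0$ and Proposition~\ref{relHM} inside the connected Lie group $G_0/N_0$ shows that the discrete normal central quotient $(N\cap G_0)/N_0$ has trivial Pontryagin dual, hence is trivial; so $N\cap G_0=N_0$ and $N/N_0$ embeds into the discrete component group $G/G_0$. Proposition~\ref{dino} then forces $N/N_0$ to be finitely generated and finite-by-(quasi-finite simple), and Malcev's theorem on residual finiteness of finitely generated linear groups, together with the fact that an infinite quasi-finite simple group has no proper finite-index subgroup, provides the desired contradiction. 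The main obstacle is the delicate analysis in case (a): eliminating the central torus via the interplay between $G$-irreducibility on $N/K$, the discrete $\Aut(K)$-action, and the Fourier-decay hypotheses behind Theorem~\ref{HMforsemidir} is the heart of the argument, and once it is in place the other two cases fall to parallel or simpler variants.
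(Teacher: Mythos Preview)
Your overall strategy matches the paper's, but there is a real gap in case~(a), where you must show the central torus $K$ is trivial. The dichotomy you propose does not close: if no non-trivial character of $N$ is $G_0$-fixed, then for any extension $\tilde\chi\in\widehat N$ of a non-trivial $\chi\in\widehat K$, the $G_0$-orbit of $\tilde\chi$ sits inside a single fiber $\{m\}\times\R^n\subset\widehat N$, and since $G_0$ acts irreducibly and non-trivially on that affine $\R^n$, Lemma~\ref{GHV} together with the oscillatory-integral estimates behind Theorem~\ref{HMforsemidir} show the Fourier transform \emph{does} decay along the $\R^n$-direction of $N$ --- the opposite of what you assert. To rescue the argument you would need the first horn to always hold (a non-trivial $G_0$-fixed character of $N$ exists whenever $a\ge 1$), but you give no argument for this; it amounts to a cohomology-vanishing statement you do not address. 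The paper bypasses all of this with a short linearity-based trick: since $N\subset\overline{[G,G]}$ by Lemma~\ref{finitedim}(c), in a faithful complex representation $V$ of $G_0$ each $K$-isotypic summand $V_i$ is $G_0$-stable with $K$ acting by scalars and $\det|_N=1$, so $K$ acts on $V_i$ by roots of unity; connectedness of $K$ then forces $K$ to act trivially on $V$, whence $K=\{1\}$ by faithfulness.

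Case~(b) inherits the same gap, since you invoke ``the character argument of case~(a)'' to kill the central torus. The paper instead uses the structural fact (Hochschild) that Levi factors in linear Lie groups are closed, so the dense $S$ of Proposition~\ref{LIEcar}(b) equals $N$ outright, after which Theorem~\ref{structurerelHM}(2b) gives simplicity directly. Your case~(c) is essentially right but misstated: Proposition~\ref{relHM} applies to $(G_0/N_0,\,N/N_0)$, not to $(N\cap G_0)/N_0$, and only in the sub-case $N\subset G_0$ (where centrality of $N/N_0$ in the connected group $G_0/N_0$ makes every character $G_0$-fixed, forcing $N=N_0$ and contradicting non-compactness); when $N\not\subset G_0$ one gets $N\cap G_0$ compact from Proposition~\ref{perm2}, and your Malcev argument then goes through --- though you should also say, as the paper does, why $N/(N\cap G_0)$ is linear (it embeds in the quotient of the normalizer of the Zariski-closed subgroup $N\cap G_0$ in $\GL(V)$).
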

\begin{proof}
Suppose that $(G,N)$ has the relative Howe-Moore Property. By Proposition \ref{perm2}, every characteristic closed subgroup of $N$ is compact, so we can apply Proposition \ref{LIEcar}; we confront the three cases provided by that proposition to the conclusions of Theorem \ref{structurerelHM}.

\begin{itemize}
\item (a) $N$ has a compact central torus $K$ such that $N/K\simeq\R^n$. Then $N$ is connected. By Theorem \ref{structurerelHM} (case 2.a), $K=W(N)$ and $G$ acts irreducibly and non-trivially on $N/K$.

We still have to prove that $K=\{1\}$. By Proposition \ref{perm1}(ii), $(G_0,N)$ has the relative Howe-Moore Property as well, so we can suppose that $G$ is connected. Therefore $K$ is central in $G$. Let us consider $G$ as endowed with a continuous faithful {\it complex} representation $V$. Since $K$ is compact and central, it acts diagonally on $V$; so writing $V$ as a sum of $K$-isotypic subspaces $V_i$, the group $G$ preserves each $V_i$, on which $K$ acts by scalars. Fix one $i$. Consider the determinant map $G\to\C^*$ for the action of $G$ on $V_i$. Since $N$ is contained in $\overline{[G,G]}$ (by Lemma \ref{finitedim}(c)), the determinant map on $V_i$ is trivial on $N$, and in particular on $K$, which acts by scalars. This shows that $K$ acts by $d_i$-th roots of unity, with $d_i$ the complex dimension of $V_i$; since $K$ is connected, this shows that $K$ acts trivially on $V_i$. Since this holds for any $i$, this shows that $K$ acts trivially on $V$ and therefore $K=\{1\}$ by faithfulness.

\item (b) $N$ is connected reductive with dense Levi factor $S$. Since $G$ is linear, $S$ is closed (see e.g. Theorem 4.5 in \cite{Hoch}) and therefore $S=N$ and $W(N)=\{1\}$. By Theorem \ref{structurerelHM} (case 2.b), $S$ is a simple, non-compact, linear Lie group.

\item (c) Suppose that $N_0$ is compact. Let us show that this case cannot occur. If $N\subset G_0$, then $N/N_0$ is normal and discrete in $G_0/N_0$, hence central. In particular, it is finitely generated, infinite (because $N$ is not compact) and thus has a finite index proper characteristic subgroup, a contradiction. So $N\cap G_0$ is properly contained in $N$, so is compact. By Proposition \ref{perm2}, $(G/G_0,N/(N\cap G_0))$ has the relative Howe-Moore Property, so by Proposition \ref{dino} the infinite discrete group $N/(N\cap G_0)$ has the Howe-Moore Property. Identify $G$ with its image in $\GL(V)$. Let $A$ be the normalizer in $\GL(V)$ of $N\cap G_0$; it contains $N$ and is Zariski closed, because $N\cap G_0$ is compact, hence Zariski closed. So $A/(N\cap G_0)$ is $\R$-linear, hence the infinite discrete Howe-Moore group $N/(N\cap G_0)$ is $\R$-linear which contradicts Lemma \ref{dihm}.
\end{itemize}

Conversely, the pairs given in the theorem have the relative Howe-Moore Property. In Case (b), this is part of the main result in \cite{HowMoo}; in Case (a), first $(G\ltimes N,N)$ has the relative Howe-Moore Property by Theorem \ref{semilie}. Then by Proposition \ref{perm2}, $(G,N)$ has the relative Howe-Moore Property.
\end{proof}

\section{The relative Howe-Moore property for analytic $p$-adic groups}\label{sec:6}
In this section, our goal is to prove Theorem \ref{structureP-ADIC}, which is the non-archimedean part of Theorem \ref{main}. In all this part, $p$ is a fixed prime number.

\begin{Lem}\label{normalsolv} Let $G$ be an analytic $p$-adic group (or more generally, any closed subgroup of $\GL(n,\K)$ for $\K$ a Hausdorff topological field). If $G$ possesses an open solvable subgroup, then $G$ also possesses a normal open solvable subgroup.
\end{Lem}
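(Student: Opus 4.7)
The natural candidate for the desired subgroup is $N := G \cap \widetilde{G}^0$, where $\widetilde{G}$ denotes the Zariski closure of $G$ in $\GL(n,\K)$ and $\widetilde{G}^0$ is its Zariski identity component. Since $\widetilde{G}^0$ is a normal Zariski closed subgroup of finite index in $\widetilde{G}$, and Zariski closed subsets of $\GL(n,\K)$ are also closed in the Hausdorff topology, the subgroup $N$ is closed and normal in $G$, and has index at most $[\widetilde{G}:\widetilde{G}^0] < \infty$, hence is open in $G$.

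The substance of the proof is to show that $N$ is solvable, which I would reduce to showing that $\widetilde{G}^0$ is itself a solvable algebraic group. My plan is to compare $\widetilde{G}^0$ with the Zariski closure $\widetilde{H}$ of $H$ in $\GL(n,\K)$. Since $H$ is solvable and the Zariski closure of a solvable group is again solvable (a standard fact, the derived subgroups behaving well with respect to Zariski closure), $\widetilde{H}$ is a solvable algebraic subgroup of $\GL(n,\K)$. It therefore suffices to prove the inclusion $\widetilde{G}^0 \subseteq \widetilde{H}$, since then $\widetilde{G}^0$ is solvable as a subgroup of the solvable $\widetilde{H}$, and consequently so is $N$.

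The inclusion $\widetilde{G}^0 \subseteq \widetilde{H}$ is the key step and main obstacle. For the analytic $p$-adic case, the decisive point is that $\widetilde{G}^0$ coincides with the algebraic hull of the Lie algebra of $G$, that is, the smallest Zariski closed subgroup of $\GL(n,\K)$ whose Lie algebra contains $\text{Lie}(G)$; since $H$ is open in $G$, the subgroups $H$ and $G$ share the same Lie algebra, so the Zariski closure of $H$ must also contain this algebraic hull. For the more general case of a closed subgroup $G$ of $\GL(n,\K)$ with $\K$ a Hausdorff topological field, one argues by the same principle, the key input being that any Hausdorff-open neighborhood of the identity in $G$ is Zariski dense in $\widetilde{G}^0$, which forces $\widetilde{H} \supseteq \widetilde{G}^0$.
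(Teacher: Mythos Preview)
Your approach has a genuine gap: the key inclusion $\widetilde{G}^0 \subseteq \widetilde{H}$ can fail, and consequently your candidate $N = G \cap \widetilde{G}^0$ need not be solvable.

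For a clean counterexample in the general setting, take $\K$ to be any field with the discrete topology (this is a Hausdorff topological field) and let $G = \SL_2(\K)$. Then $G$ is discrete, so the trivial subgroup $H=\{1\}$ is open and solvable; yet $\widetilde{G}=\widetilde{G}^0=\SL_2$ and $N=G\cap\widetilde{G}^0=G$ is not solvable. The same phenomenon already occurs in the $p$-adic case: let $G$ be a Zariski-dense discrete free subgroup of $\SL_2(\Q_p)$ (a Schottky group). Then $H=\{1\}$ is open solvable and $\text{Lie}(G)=0$, but $\widetilde{G}^0=\SL_2$. So neither of your two ``key inputs'' holds: $\widetilde{G}^0$ is not the algebraic hull of $\text{Lie}(G)$, and Hausdorff-open neighborhoods of the identity in $G$ are not Zariski dense in $\widetilde{G}^0$. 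The underlying issue is that the Zariski closure of $G$ may be inflated by a discrete part of $G$ that has nothing to do with the given open solvable subgroup.

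The paper's argument avoids fixing a candidate in advance. It uses Noetherianity of the Zariski topology to choose, among all open subgroups of $G$, one $U$ whose Zariski closure $V$ is \emph{minimal}. Minimality forces $\overline{U_1}^Z=V$ for every open $U_1\subseteq U$; applying this to $U\cap gUg^{-1}$ (which is open in both $U$ and $gUg^{-1}$) yields $V=gVg^{-1}$, so $V$ is normalized by $G$. Intersecting $U$ with the given open solvable subgroup and using minimality again shows that $V$ is the Zariski closure of a solvable group, hence solvable. The point is that $V$ is the Zariski closure of a \emph{small} open subgroup rather than of all of $G$; this is exactly what your approach misses.
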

\begin{proof}
Since any decreasing sequence of Zariski-closed subsets stabilizes, there exists an open (for the given topology) subgroup $U$ of $G$ for which the Zariski closure $V=\overline{U}^Z$ of $U$ in $G$ is minimal, i.e.~does not properly contain the Zariski closure of any other open subgroup of $G$. It follows that for any open subgroup $U_1$ contained in $U$, $\overline{U_1}^Z=V$. Besides, if $gUg^{-1}$ is any conjugate of $U$, then it satisfies the same property as $U$ and therefore $\overline{U_2}^Z=\overline{gUg^{-1}}^Z=gVg^{-1}$ for every open subgroup $U_2$ contained in $gUg^{-1}$. Applying this to $U_2=U\cap gUg^{-1}$, we obtain $gVg^{-1}=V$, so $V$ is normal. Moreover, taking $U_2$ to be solvable, we see that $V$ is solvable as well.
\end{proof}

\begin{Lem}\label{padicLie} Let $A$ be a non-compact, abelian $p$-adic Lie group, such that every closed characteristic subgroup is compact. Then either $A$ is isomorphic to $\Q_p^n$ (for some $n\geq 1$), or $A$ is discrete of one of the following form
\begin{itemize}
\item an arbitrary vector space over $\Q$;
\item an arbitrary vector space over $\Z/\ell\Z$ for some prime $\ell$;
\item an artinian divisible group $(\Z[1/\ell]/\Z)^k$ for some prime $\ell$ and integer $k\ge 1$.
\end{itemize}
Conversely, all proper characteristic subgroups of these groups are compact.
\end{Lem}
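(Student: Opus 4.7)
The proof naturally splits according to whether $A$ has positive dimension as a $p$-adic analytic group or is discrete; in the first case I aim to show $A \cong \Q_p^n$ via a global exponential map, and in the second I classify $A$ through characteristic subgroups such as $A_{\mathrm{tors}}$ and $A^{\mathrm{div}}$. Throughout, I use that for an abelian $p$-adic Lie group the multiplication map $[q]\colon A\to A$ has differential $q\cdot\Id$ on the Lie algebra $\mathfrak{a}$, hence is a local diffeomorphism; so $qA$ is open (and closed), and $A[q]$ is $0$-dimensional (discrete). Both are characteristic, which is what allows the hypothesis to bite.

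For $A$ of positive dimension, I first show $A$ is divisible and torsion-free. Divisibility: if $qA\neq A$, then $qA$ is proper characteristic and hence compact; but $A[q]$ is characteristic, proper, discrete and therefore finite, so the induced topological isomorphism $A/A[q]\cong qA$ (coming from the open map $[q]$) would force $A$ itself compact, a contradiction. For torsion-freeness, I pick a compact open subgroup $U\cong\Z_p^n$, which is torsion-free; the $q$-primary torsion $T_q$ cannot accumulate at $0\in U$, so $T_q$ is discrete and closed in $A$, characteristic, and proper (since $U$ is a torsion-free neighbourhood of $0$); the hypothesis then forces $T_q$ compact, hence finite. But if $A[q]\neq 0$, divisibility of $A$ produces elements of order $q^k$ for every $k$, making $T_q$ infinite, contradiction.

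With $A$ divisible and torsion-free, the local exponential near $0$ extends uniquely to a continuous homomorphism $\exp\colon\mathfrak{a}\to A$: for $x\in\mathfrak{a}$ choose $k$ so that $p^kx$ lies in the domain of the local $\exp$, and define $\exp(x)$ as the unique element whose $p^k$-th multiple is $\exp(p^kx)$ (existence by divisibility, uniqueness by torsion-freeness). The kernel is a discrete subgroup of $\mathfrak{a}\cong\Q_p^n$, which must be trivial since for any nonzero $x$ the sequence $p^kx$ converges to $0$ while staying in the subgroup. Functoriality of $\exp$ under automorphisms of $A$ (which induce automorphisms of $\mathfrak{a}$) shows that $\exp(\mathfrak{a})$ is characteristic; it is open and homeomorphic to $\Q_p^n$, so non-compact, and by the hypothesis equals $A$, giving $A\cong\Q_p^n$.

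In the discrete case, the characteristic subgroups $A_{\mathrm{tors}}$, $A^{\mathrm{div}}$, the $\ell$-primary parts $A_\ell$, and $\ell A$, $A[\ell^n]$ drive the classification. If $A$ is mixed, finiteness of $A_{\mathrm{tors}}$ combined with the fact that nonzero divisible groups are infinite pushes $A$ to be simultaneously reduced and divisible, impossible. If $A$ is torsion-free, then each $\ell A$ is isomorphic to $A$ (by injectivity of $[\ell]$), hence infinite, so must equal $A$; $A$ is divisible torsion-free, i.e.~a $\Q$-vector space. If $A$ is torsion, the characteristic $\ell$-primary decomposition forces $A=A_\ell$ for a single prime $\ell$ (else various proper sums $\bigoplus_{\ell\in S}A_\ell$ give infinite characteristic subgroups), and then either $A$ is reduced -- in which case $\ell A$ is finite, bounding the exponent, and finiteness of the characteristic subgroup $A[\ell]$ with the structure theorem for bounded-exponent abelian groups forces exponent exactly $\ell$ (so $A$ is an $\mathbb{F}_\ell$-vector space) -- or $A$ is divisible, and finiteness of $A[\ell]$ bounds the number of Prüfer summands to a finite $k$, giving $(\Z[1/\ell]/\Z)^k$. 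The converse is a direct computation of $\Aut(A)$: $\GL_n(\Q_p)$ for $\Q_p^n$ acts transitively on nonzero vectors (continuous additive automorphisms are automatically $\Q_p$-linear); the full linear group inside $\Aut(A)$ acts transitively on nonzero vectors in the $\Q$- and $\mathbb{F}_\ell$-cases; and $\Aut((\Z[1/\ell]/\Z)^k)=\GL_k(\Z_\ell)$ acting through the Tate module has only $0$, $A$, and the finite layers $A[\ell^n]$ as invariant subgroups. I expect the main obstacle to be the positive-dimensional case, specifically the global construction of $\exp$ from the local one and the verification that its image is characteristic via functoriality.
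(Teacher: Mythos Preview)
Your proof is correct and follows a genuinely different route from the paper's. The paper organizes the argument around the elliptic radical $R_{ell}(A)$, which is open since $A$ admits a compact open subgroup, and splits into the cases $R_{ell}(A)$ compact versus $R_{ell}(A)=A$; you split instead by dimension. In the positive-dimensional situation the paper first passes to $B=A/T$ with $T$ the (finite) torsion subgroup, shows $pB=B$ by an index-counting argument, realizes $B$ as the direct limit $\bigcup_k u_p^{-k}(M)\cong\Q_p^n$ of an open $M\cong\Z_p^n$ under the inverse of multiplication by $p$, and then invokes Pontryagin duality (divisibility of $\widehat{\Q_p^n}$) to split $A\cong T\times\Q_p^n$ and force $T$ trivial. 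Your exponential-map construction is more direct: once $A$ is shown divisible and torsion-free, the local $\exp$ extends uniquely to a global homomorphism whose image is characteristic by functoriality, avoiding both the direct-limit description and the duality step. In the discrete case the paper again uses Pontryagin duality to split off $R_{ell}(A)$ when reaching the $\Q$-vector-space conclusion, whereas you treat all discrete cases uniformly via $A_{\mathrm{tors}}$, $A^{\mathrm{div}}$, and the maps $[\ell]$. One small expositional gap: in your mixed discrete case, the assertion that $A$ is forced to be divisible (contradicting reducedness) relies on each $\ell A\cong A/A[\ell]$ being infinite because $A[\ell]\subset A_{\mathrm{tors}}$ is finite, an argument you spell out only in the torsion-free case; it would be worth making this explicit. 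Overall you trade the paper's Pontryagin duality and direct-limit arguments for the Lie-theoretic exponential and elementary abelian-group structure theory.
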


%A nondiscrete, abelian, characteristically simple, $p$-adic Lie group, is isomorphic to $\Q_p^n$.
\begin{proof}The last assertion is easy and left to the reader.

Assume that every closed proper characteristic subgroup of $A$ is compact. The elliptic radical $R_{ell}(A)$ is a closed characteristic subgroup, so we separate two cases. Note that $R_{ell}(A)$ is open, as $A$ admits compact open subgroups.

\begin{enumerate}
\item[a)] $R_{ell}(A)$ is compact. As a compact abelian $p$-adic Lie group, it is isomorphic to the direct product of a finite group and $\Z_p^k$ for some $k$; in particular, it has no divisible element. Hence $A/R_{ell}(A)$ is a torsion-free discrete abelian group, which is characteristically simple. Since for any $m>0$ the subgroup of $m$-divisible points is characteristic, the group $A/R_{ell}(A)$ is divisible; actually, being torsion-free it is a non-zero $\Q$-vector space. We claim that $A$ is the direct product of $R_{ell}(A)$ and $A/R_{ell}(A)$. Indeed, the dual group of $A/R_{ell}(A)$, which is a compact connected group, is a closed subgroup of $\widehat{A}$. Since a compact connected abelian group is divisible, it has a direct summand in the ambient group (forgetting the topology); since it is open, the direct factor is discrete, hence closed. So $A/R_{ell}(A)$ is (topologically) a direct summand in $A$. As $A/R_{ell}(A)$ is the set of divisible points in $A$, it is a characteristic subgroup. The assumption then forces $R_{ell}(A)$ to be trivial, and $A$ is a $\Q$-vector space with the discrete topology.

\item[b)] $R_{ell}(A)=A$. For some $n\geq 0$, the Lie algebra of $A$ is isomorphic to the abelian Lie algebra
$\Q_p^n$. By \cite{Bou} (III.7.1, Th\'eor\`eme 1), $A$ has an open subgroup $M$ isomorphic to $\Z_p^n$. Let $T$ be the torsion subgroup of $A$. As $M$ is torsion-free, $T$ is discrete in $A$. So $T$ is a closed characteristic subgroup. Again, we have two cases. If $T=A$, then $A$ is discrete and torsion. It is therefore the direct sum of its $\ell$-components ($\ell$ ranging over primes), so the assumption on characteristic subgroups implies that $A$ has $\ell$-torsion for only one prime $\ell$. If $\ell A=A$, then $A$ is divisible, hence of the form $(\Z[1/\ell]/\Z)^{(I)}$. Its $\ell$-torsion is a proper subgroup, so is finite, so $I$ is finite. Otherwise, $\ell A$ is a proper subgroup of $A$, so is finite, so $\ell^kA=\{0\}$ for some $k$. If the $\ell$-torsion subgroup of $A$ were finite, then by induction, so would be the $\ell^n$-torsion, so $A$ would be finite, a contradiction. So the $\ell$-torsion is all of $A$.

If $T\neq A$, then $T$ is compact, hence finite.  Let us show that $B=:A/T$ is isomorphic to $\Q_p^n$. The subgroup $pB$ contains $pM$ and therefore is open, hence closed. It is also a characteristic subgroup, so is compact or $pB=B$. Assume by contradiction that $pB$ is compact. We claim that $pB$ has finite index in $B$, at most the index $n$ of $p^2B$ in $pB$. Indeed, if $x_1,\dots,x_{n+1}\in B$, then at least two of $px_1,\dots,px_{n+1}$ coincide in $pB/p^{2}B$, say $px_i-px_j=p^2y$. Since $B$ is torsion-free, this implies $x_i-x_j=py$ and shows that $pB$ has finite index in $B$, so $B$ is compact, a contradiction. So $pB=B$. The homomorphism $u_p:B\to B:x\mapsto px$ is therefore bijective, and both this map and its inverse are continuous on the neighborhood $pM$ of zero in $A$, so $u_p$ is
a bi-continuous automorphism. So the union $H=\bigcup_{k>0}u_p^{-k}(M)$
is the direct limit of the sequence of homomorphisms $u_p^{-1}:\Z_p^n\to p^{-1}\Z_p^n$,
so $H$ is isomorphic to $\Q_p^n$. We see that $H$ does not depend on the
choice of $M$ (since any other choice $M'$ would contain $p^kM$ for some
$k$), so $H$ is a characteristic subgroup, hence $H=B$. Again using Pontryagin duality, and the fact that $\Q_p^n$ has divisible dual (isomorphic to itself), we see that the group $A$ is the direct product of $T$ and $\Q_p^n$, with $n>0$. The factor $\Q_p^n$ is characteristic (as the set of divisible points in $A$), so $T$ is trivial.\qedhere
\end{enumerate}
\end{proof}

For any group $G$, we define the {\it radical} $R(G)$ of $G$ as the subgroup generated by closed, normal, solvable subgroups of $G$. This is a characteristic subgroup of $G$. If $G$ is a closed subgroup of $\GL_n(K)$, where $K$ is a topological Hausdorff field, then $R(G)$ is solvable and closed in $G$.

By a $p$-adic group of simple type we mean $S^+$, with $S=\mathcal{S}(\Q_p)$, where $\mathcal{S}$ is a simple isotropic $\Q_p$-algebraic group and $S^+$ is the subgroup of $S$ generated by unipotent elements.

\begin{Lem}
Let $G$ be a $p$-adic Lie group. Suppose that any proper closed characteristic subgroup of $G$ has infinite index. Let $K$ be a closed, compact normal subgroup of $G$. Then $K$ is central in $G$.\label{comcen}
\end{Lem}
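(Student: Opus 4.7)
The plan is to show that the conjugation homomorphism $\rho\colon G\to\Aut(K)$ is trivial, by exploiting the profinite structure of $\Aut(K)$ and the rich supply of finite characteristic quotients of $K$, and then transferring this structure to $G$ using the hypothesis.

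Since $K$ is a compact $\Q_p$-analytic group, it is totally disconnected, hence profinite and topologically finitely generated. I would first pick a descending basis $(K_n)_{n\ge 1}$ of open characteristic subgroups of $K$ with $\bigcap_n K_n=\{1\}$: starting from a basis of open normal subgroups of $K$ and passing to their $\Aut(K)$-orbit intersections, which are finite because $K$ has only finitely many open subgroups of each finite index. Each $K_n$ is characteristic in $K$, so normal in $G$, and the conjugation action of $G$ on the finite group $K/K_n$ has kernel
\[H_n\;:=\;\{g\in G:[g,K]\subset K_n\},\]
a closed normal subgroup of $G$ of finite index at most $|\Aut(K/K_n)|$.

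The key step is to show $H_n=G$; passing to the limit will then give $C_G(K)=\bigcap_n H_n=G$. Since $K$ may fail to be characteristic in $G$, so may $H_n$, and I would therefore form the characteristic core
\[H_n^{\sharp}\;:=\;\bigcap_{\phi\in\Aut(G)}\phi(H_n),\]
a closed characteristic subgroup of $G$ contained in $H_n$. If $H_n^{\sharp}$ has finite index in $G$, then the hypothesis forces $H_n^{\sharp}=G$, hence $H_n=G$, and the desired conclusion $K\subset Z(G)$ follows by intersecting over $n$.

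The main obstacle is thus showing that $H_n^{\sharp}$ has finite index. Each $\phi(H_n)$ is an open subgroup of $G$ of index at most $|\Aut(K/K_n)|$, so the question reduces to the assertion that a $p$-adic Lie group has only finitely many open subgroups of any given finite index. This can be argued by reducing to a compact open subgroup $U\le G$: the topological finite generation of $U$ ensures that there are only finitely many choices for $H\cap U$, and each finite-index open subgroup $H\le G$ is then determined by $H\cap U$ together with the finitely many cosets of $U$ meeting $H$, whose number is bounded by the index. With this finiteness in hand, $H_n^{\sharp}$ is a finite intersection of finite-index subgroups, hence of finite index, completing the argument.
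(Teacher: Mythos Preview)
Your strategy matches the paper's: pick a basis $(K_n)$ of open characteristic subgroups of the compact $p$-adic Lie group $K$, and show that each kernel $H_n$ of the $G$-action on $K/K_n$ equals $G$. The paper simply asserts that $H_n$ is characteristic in $G$ and applies the hypothesis; this is clear if $K$ itself is characteristic in $G$ (as it is in every application of the lemma in the paper), but it does not follow from ``$K$ normal'' alone. You noticed this and passed to the characteristic core $H_n^\sharp$, but the finiteness step fails: a $p$-adic Lie group need \emph{not} have only finitely many open subgroups of a given index, since any countable discrete group is a $0$-dimensional $p$-adic Lie group---take $G=\bigoplus_\N\Z/p\Z$. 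Your counting argument errs at ``the finitely many cosets of $U$ meeting $H$, whose number is bounded by the index'': that number is $[H:H\cap U]$, bounded by $[G:U]$ rather than by $[G:H]$, and may well be infinite. (Your claim does hold when $G$ is compactly generated, hence topologically finitely generated, but the lemma carries no such hypothesis.)

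In fact the lemma is false as stated with $K$ merely normal. Take $G=\bigoplus_\N A_5$ with the discrete topology: its only characteristic subgroups are $\{1\}$ and $G$, so the hypothesis holds vacuously, yet a single factor $A_5$ is a finite (hence compact) normal non-central subgroup. So the normal-only version cannot be salvaged; the fix is to assume $K$ characteristic, after which the paper's direct observation that $H_n$ is characteristic settles the matter and your characteristic-core detour becomes unnecessary.
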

\begin{proof}
Since $K$ is a compact $p$-adic Lie group, it is topologically finitely generated. Therefore it has a decreasing sequence of (open) characteristic finite index subgroups $(K_n)$ such that $\bigcap_nK_n=\{1\}$. The action of $G$ on $K/K_n$ has closed kernel of finite index; this is a characteristic subgroup of $G$, so this means that this is all of $G$, i.e.~the action of $G$ on $K/K_n$ is trivial for all $n$. This implies that $K$ is central in $G$.
\end{proof}

\begin{Lem}
Let $G$ be an analytic $p$-adic Lie group, $\mathfrak{g}$ its Lie algebra, $\mathfrak{r}$ the radical of $\mathfrak{g}$. Then the radical $R(G)$ of $G$ has $\mathfrak{r}$ as Lie algebra.\label{radlie}
\end{Lem}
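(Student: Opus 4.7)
The plan is to prove $\mathrm{Lie}(R(G)) = \mathfrak{r}$ by establishing both inclusions, using the $p$-adic Lie correspondence between closed subgroups and Lie subalgebras (Bourbaki III.7).

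For the inclusion $\mathrm{Lie}(R(G)) \subseteq \mathfrak{r}$: I first observe that if $N_1, N_2$ are closed normal solvable subgroups of $G$, then their product $N_1 N_2$ is again closed (since the image of $N_2$ in $G/N_1$ is closed), normal, and a solvable extension, hence solvable. Any closed normal solvable subgroup of $G$ has Lie algebra a solvable ideal of $\mathfrak{g}$, contained in $\mathfrak{r}$; and the Lie algebras of finite products $N_1 \cdots N_k$ of such subgroups form an ascending family of solvable ideals in a finite-dimensional space, so they stabilize at some maximum $\mathfrak{s}^\star \subseteq \mathfrak{r}$. Taking a corresponding product $N^\star$ with $\mathrm{Lie}(N^\star) = \mathfrak{s}^\star$, every further closed normal solvable $N$ satisfies $\mathrm{Lie}(N N^\star) = \mathfrak{s}^\star$ by maximality, so $N$ agrees with $N^\star$ on an open neighbourhood of the identity. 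Consequently, $R(G)$ and $N^\star$ share an open subgroup, giving $\mathrm{Lie}(R(G)) = \mathfrak{s}^\star \subseteq \mathfrak{r}$.

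For the inclusion $\mathfrak{r} \subseteq \mathrm{Lie}(R(G))$: by integrability of Lie subalgebras for $p$-adic analytic groups (Bourbaki III.7.1, Théorème 2) there exist an open subgroup $U$ of $G$ and a closed analytic subgroup $S \subseteq U$ with $\mathrm{Lie}(S) = \mathfrak{r}$. After shrinking so that $S$ is a uniform pro-$p$ group, $S$ is solvable because its Lie algebra is. Since $\mathfrak{r}$ is $\mathrm{Ad}(G)$-invariant (a characteristic ideal), every conjugate $gSg^{-1}$ still has Lie algebra $\mathfrak{r}$ and shares an open subgroup with $S$ by local uniqueness in the $p$-adic Lie correspondence. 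Assembling these $G$-conjugates (with further shrinking if needed) yields a $G$-invariant closed subgroup $T$ with $\mathrm{Lie}(T)=\mathfrak{r}$ and with a compact solvable open subgroup; arguing as in Lemma \ref{normalsolv}, $T$ is itself solvable. Hence $T \subseteq R(G)$ and $\mathfrak{r} = \mathrm{Lie}(T) \subseteq \mathrm{Lie}(R(G))$.

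The main obstacle is the last step: assembling the $G$-conjugates of the locally defined $S$ into a single closed normal solvable subgroup of $G$ with Lie algebra exactly $\mathfrak{r}$, without inadvertently enlarging the Lie algebra or introducing non-solvable ``discrete'' components. Handling this requires the local uniqueness provided by the $p$-adic Lie correspondence, combined with the fact that $\mathrm{Ad}(G)$ preserves $\mathfrak{r}$, to guarantee that all conjugates agree near the identity and that the resulting global subgroup is solvable.
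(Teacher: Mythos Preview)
Your easy inclusion $\mathrm{Lie}(R(G))\subseteq\mathfrak r$ is fine in spirit (and the paper dispatches it in one line, using that $R(G)$ is closed and solvable for linear $G$, so its Lie algebra is a solvable ideal). The real issue is the reverse inclusion, where your ``assembling'' step is a genuine gap.

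Concretely: you integrate $\mathfrak r$ to a small solvable $S$, observe that every $gSg^{-1}$ again has Lie algebra $\mathfrak r$, and then write ``assembling these $G$-conjugates \dots\ yields a $G$-invariant closed subgroup $T$ with $\mathrm{Lie}(T)=\mathfrak r$ and with a compact solvable open subgroup; arguing as in Lemma~\ref{normalsolv}, $T$ is itself solvable.'' Two problems. First, you never say what $T$ is. The subgroup generated by all $gSg^{-1}$ is $G$-invariant, but there is no reason it is closed, nor that its closure has Lie algebra \emph{equal} to $\mathfrak r$ rather than some larger ideal; ``local uniqueness'' only tells you that each individual conjugate agrees with $S$ on \emph{some} neighbourhood of $1$, not on a uniform one, so products of such elements need not stay inside a fixed integral subgroup of $\mathfrak r$. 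Second, and more seriously, Lemma~\ref{normalsolv} does \emph{not} conclude that a group with an open solvable subgroup is solvable; it only produces a \emph{normal} open solvable subgroup. So even granting your $T$, you get a solvable $T_0\vartriangleleft T$ open in $T$, but $T_0$ need not be normal in $G$, and $T/T_0$ could be an arbitrary (non-solvable) discrete group. You would still need a characteristic solvable subgroup of $T$ that is open---which is exactly the radical $R(T)$, and for that to be closed and solvable you are back to invoking linearity, as the paper does.

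The paper sidesteps the whole ``assembling'' problem: instead of building a normal subgroup from local data, it writes down a \emph{global} closed normal subgroup directly, namely the kernel $R_2$ of the adjoint action $G\to\GL(\mathfrak g/\mathfrak r)$. This $R_2$ is closed and normal by construction, and contains a small solvable group $R_1$ with Lie algebra $\mathfrak r$ as an open subgroup (since $R_1$ acts trivially on $\mathfrak g/\mathfrak r$). Now Lemma~\ref{normalsolv} applied to $R_2$ shows that $R(R_2)$ is open in $R_2$, hence has Lie algebra $\supseteq\mathfrak r$; and $R(R_2)$ is characteristic in $R_2$, hence normal in $G$, hence contained in $R(G)$. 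The missing idea in your argument is precisely this: rather than conjugating $S$ around and hoping the pieces fit together, pass to the kernel of the induced action on $\mathfrak g/\mathfrak r$ to get normality and closedness for free.
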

\begin{proof}
First note that $\mathfrak{r}$ is the Lie algebra of a compact solvable subgroup $R_1$ of $G$. The adjoint action of $G$ on $\mathfrak{g}/\mathfrak{r}$ is trivial on $R_1$ (provided $R_1$ is small enough); let $R_2$ be the kernel of this action. Then $R_2$ contains $R_1$, and has an open solvable subgroup. By Lemma \ref{normalsolv}, the radical $R$ of $R_2$ is open in $R_2$; it contains a finite index subgroup of $R_1$, so its Lie algebra contains $\mathfrak{r}$.
Since $R$ is characteristic in $R_2$ which is normal in $G$, $R$ is normal in $G$. Therefore $R(G)$ contains $R$, so its Lie algebra contains $\mathfrak{r}$; conversely the Lie algebra of $R(G)$ is a solvable ideal of $\mathfrak{g}$, so it is contained in $\mathfrak{r}$.
\end{proof}

The following result provides in particular a description of characteristically simple analytic $p$-adic groups.

\begin{Prop}\label{charanal} Let $G$ be a non-compact, analytic $p$-adic group such that every proper, closed, characteristic subgroup is compact. Then one of the following (mutually exclusive) cases holds:
\begin{enumerate}
\item[(a)] $R(G)$ is compact open, central in $G$ and the discrete group $G/R(G)$ is infinitely generated and contains a non-abelian free subgroup.
\item[(b)] $G$ is isomorphic to $\Q_p^n$ for some $n>0$.
\item[(c)] $G\simeq S^k/Z$, where $S$ is $p$-adic of simple type, $k\ge 1$ an integer, and $Z$ a central subgroup of $S^k$, invariant under a transitive group of permutations of $\{1,\dots,k\}$.
\end{enumerate}
Conversely Cases (b) and (c) imply that any closed proper characteristic subgroup of $G$ is trivial in (b), finite central in (c).
\end{Prop}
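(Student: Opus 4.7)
My plan is to analyse $G$ through its Lie algebra $\g=\mathrm{Lie}(G)$ and its solvable radical $\mathfrak{r}\subseteq\g$, using Lemma~\ref{radlie} to identify $\mathfrak{r}$ with $\mathrm{Lie}(R(G))$. Two facts drive the argument. First, $R(G)$ is characteristic, so by hypothesis it is either compact or all of $G$. Second, Lemma~\ref{comcen} applies (since $G$ is non-compact, every proper closed characteristic subgroup has infinite index), so every compact normal subgroup of $G$ is central. The proof then splits according to the Lie-algebraic type of $\g$: solvable ($\mathfrak{r}=\g$), semisimple ($\mathfrak{r}=0$), or mixed ($0\subsetneq\mathfrak{r}\subsetneq\g$).

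\emph{Solvable case.} Here $R(G)$ is open. If $R(G)=G$, I would show that $G$ is abelian: $\overline{[G,G]}$ is a proper closed characteristic subgroup (by solvability), hence compact and central, so $G$ is nilpotent of class $\le 2$; a standard divisibility argument (continuous alternating bilinear maps from a $\Q_p$-vector space into a compact abelian group are trivial) then forces $[G,G]=\{1\}$. Lemma~\ref{padicLie} applied to the abelian $G$ yields $G\simeq\Q_p^n$ as its only non-discrete option, which is Case~(b). If $R(G)\ne G$, then $R(G)$ is compact, open and central, placing us in Case~(a); the remaining properties of $G/R(G)$ follow from the Tits alternative applied to the linear $p$-adic quotient $G/R(G)$ (virtual solvability would yield a proper characteristic subgroup of finite index via the solvable radical of $G/R(G)$, contradicting the hypothesis, so $G/R(G)$ contains a non-abelian free subgroup), together with Malcev's residual finiteness theorem for finitely generated linear groups (a finitely generated residually finite group with only finite proper characteristic subgroups would be finite, ruling out finite generation).

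\emph{Semisimple case.} Decompose $\g=\bigoplus_{i=1}^{k}\mathfrak{s}_i$ into simple ideals. The conjugation action of $G$ on $\g$ permutes these ideals, giving a continuous $\alpha\colon G\to\mathrm{Sym}(k)$ with open characteristic kernel $G_1$; since $G$ is non-compact and $G_1$ has full Lie algebra, the dichotomy forces $G_1=G$, so each $\mathfrak{s}_i$ is stabilised by inner automorphisms. Let $S_i\trianglelefteq G$ be the closed analytic subgroup integrating $\mathfrak{s}_i$. For any $\Aut(G)$-invariant $T\subsetneq\{1,\dots,k\}$, the closed subgroup generated by the $S_i$ with $i\in T$ is proper, characteristic and non-compact; the hypothesis then forces $\Aut(G)$ to act transitively on $\{1,\dots,k\}$, making all the $S_i$ pairwise isomorphic. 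By the Borel--Tits structure theory of simple algebraic groups over $\Q_p$, each $S_i\simeq S=\mathcal{S}(\Q_p)^+$ for a common simple isotropic $\Q_p$-group $\mathcal{S}$. Since $[\mathfrak{s}_i,\mathfrak{s}_j]=0$ propagates to the group level by analyticity, the $S_i$ pairwise commute, and the multiplication map $S^k\to G$ is a continuous homomorphism whose image is an open characteristic subgroup, hence all of $G$; its kernel $Z$ is central in $S^k$ and invariant under the transitive permutation action, which is Case~(c).

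\emph{Mixed case and conclusion.} If $0\subsetneq\mathfrak{r}\subsetneq\g$, then $R(G)$ is proper characteristic with nonzero Lie algebra, hence compact and central; so $\mathfrak{r}\subseteq\mathfrak{z}(\g)$, forcing $\mathfrak{r}=\mathfrak{z}(\g)$ and $\g=\mathfrak{s}\oplus\mathfrak{r}$. The quotient $G/R(G)$ has semisimple Lie algebra and falls into the previous case, so $G$ is a central extension of a group of the form $S^k/Z_0$ by the compact abelian $R(G)$; but the Schur multiplier of $S^k/Z_0$ is finite, so any such central extension has finite kernel, contradicting $\mathrm{Lie}(R(G))=\mathfrak{r}\ne 0$. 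Thus the mixed case is vacuous. The main obstacle, in my view, is the semisimple step---lifting the Lie-algebra decomposition $\g=\bigoplus\mathfrak{s}_i$ to a bona fide quasi-direct-product decomposition of $G$ in the $p$-adic setting (where there is no ``connected identity component''), identifying each factor with $\mathcal{S}(\Q_p)^+$ via Borel--Tits, and precisely controlling the central kernel $Z$ and its permutation invariance. The converse direction is a direct verification: for Case~(b) it is part of Lemma~\ref{padicLie}, and for Case~(c) the only $\mathrm{Sym}(k)$-invariant ideals of $\g=\mathfrak{s}^k$ are $0$ and $\g$, so any proper closed characteristic subgroup of $S^k/Z$ has trivial Lie algebra, hence is discrete and, by Lemma~\ref{comcen}, finite and central.
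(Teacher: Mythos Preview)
Your trichotomy by the type of $\mathfrak g$ matches the paper's dichotomy (open solvable subgroup or not; these coincide since $\mathfrak g$ is solvable exactly when $G$ has an open solvable subgroup), and your solvable branch is essentially the paper's argument. The non-solvable branch, however, has a real gap---one you yourself flag but do not close. Writing ``let $S_i\trianglelefteq G$ be the closed analytic subgroup integrating $\mathfrak s_i$'' and then invoking Borel--Tits to get $S_i\simeq\mathcal S(\Q_p)^+$ skips the decisive step: over $\Q_p$ there is no canonical integration of an ideal to a closed normal subgroup, and nothing you have said rules out $G$ being merely a \emph{proper open} subgroup of an isotypic semisimple group rather than the full thing. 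The paper fills this by exploiting linearity throughout: it passes to the Zariski-connected algebraic subgroup $H\subset\GL_n(\Q_p)$ whose Lie algebra is the perfect core $\mathfrak h$ of $\mathfrak g$, shows $G\cap H$ is characteristic (hence $G\subset H$, so $\mathfrak g$ is perfect), then lands $G$ as an open non-compact subgroup of the isotypic $S^+$, and finally uses the \emph{Howe--Moore property of $S^+$} together with Proposition~\ref{open} to force $G=S^+$. That appeal to Howe--Moore is the missing ingredient in your outline; Borel--Tits alone does not tell you $G$ fills out all of $\mathcal S(\Q_p)^+$.

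A secondary point: your separate ``mixed case'' via Schur multipliers is both unnecessary and, as written, incomplete---finiteness of the multiplier does not by itself bound the kernel of an arbitrary topological central extension (the split product $(S^k/Z_0)\times\Z_p$ already has infinite central kernel; one would need a further argument through $\overline{[G,G]}$ to exclude it). The paper sidesteps this detour: once $\mathfrak g$ is shown to be perfect and the radical is seen to be central (both via the Zariski-closure arguments above), the decomposition $\mathfrak g=\mathfrak r\times\mathfrak s$ with $\mathfrak g=[\mathfrak g,\mathfrak g]=\mathfrak s$ forces $\mathfrak r=0$ immediately, so the mixed case never arises.
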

We do not know if Case (a) can actually occur.

\begin{proof}
Suppose that $G$ has an open solvable subgroup. By Lemma \ref{normalsolv}, $G$ has a normal one, so $R(G)$ is open. We then have two cases
\begin{itemize}
\item $R(G)$ is compact open. Then $G$ cannot be virtually solvable, as otherwise $R(G)$ would have finite index and $G$ would be compact. By Tits' alternative, $G$ contains a free subgroup. Since $R(G)$ is solvable, this free subgroup maps injectively into the discrete group $G/R(G)$. Viewing $G\subset\GL(\Q_p)$, $G$ is contained in the normalizer $N$ of the Zariski-closure $R$ of $R(G)$. So $G/R(G)$ embeds injectively into $N/R$, which is contained in the group of $\Q_p$-points of a linear algebraic group; so if $G/R(G)$ is finitely generated, it is residually finite, and has a proper characteristic subgroup of finite index, a contradiction.
\item $G$ is solvable. Then $P=\overline{[G,G]}$ is a proper subgroup and is characteristic, so is compact. By Lemma \ref{comcen}, $P$ is central in $G$. Therefore, for all $g_0\in G$, the mapping $g\mapsto [g,g_0]$ defines a continuous homomorphism $G/P\to P$. But $G/P$ is by Lemma \ref{padicLie} an abelian group, either divisible or $p$-torsion, while $P$ is virtually isomorphic to $\Z_p^k$. Therefore this homomorphism has kernel of finite index in $G/P$. So either this homomorphism is trivial, for all $g_0$, and $G$ is abelian, and Lemma \ref{padicLie} allows to conclude that $G\simeq \Q_p^n$, or $G/P$ is a vector space over $\Z/p\Z$. This case actually cannot occur; indeed in this case the homomorphism above maps to the $p$-torsion in $P$, for each $g_0$. Therefore $[G,G]$ is contained in the $p$-torsion of $P$, so that $P$ is $p$-torsion. This implies that $G$ is of uniform torsion, which forces $G$ to be finite, contrary to our assumption.%\qedhere
\end{itemize}

Let us suppose now that $G$ has no open solvable subgroup, given as a subgroup of $\GL_n(\Q_p)$. Then its Lie algebra $\mathfrak{g}$ is not solvable (this follows for instance from Lemma \ref{radlie}). Let $\mathfrak{h}\neq\{0\}$ be the stable term of the derived series of $\mathfrak{g}$. Since $\mathfrak{h}$ is perfect, it is the Lie algebra of a unique connected Zariski-closed subgroup $H$. Since $\mathfrak{h}\subset\mathfrak{g}$, $G$ contains an open subgroup of $H$.
The subgroup $G\cap H$ of $G$ is characteristic in $G$, because any automorphism of $G$ stabilizes $\mathfrak{h}$. If by contradiction $G\cap H$ is compact, it is central by Lemma \ref{comcen}, but since $G$ contains a open subgroup of $H$, this would imply that the Lie algebra $\mathfrak{h}$ is abelian. This contradicts the fact that $\mathfrak{h}$ is perfect. So $G\subset H$, so that $\mathfrak{g}=\mathfrak{h}$.

Next, $R(H)\cap G$ is normal in $G$; since $G$ is not solvable, $R(H)\cap G$ is compact, hence central by Lemma \ref{comcen}. So $\mathfrak{g}=\mathfrak{r}\times\mathfrak{s}$, with $\mathfrak{s}$ semisimple, which is the Lie algebra of a unique connected semisimple subgroup $S$ of $H$. Since $\mathfrak{g}$ is perfect, we have $\mathfrak{r}=0$. Denote by $S_i$ be the isotypic factors of $S$ (each $S_i$ being the sums of simple factors with a given Lie algebra). Then $G\cap S_i$ is a closed characteristic subgroup of $G$, and contains an open subgroup of $S_i$. If it were compact, it would be central by Lemma \ref{comcen}, contradicting that $S_i$ is semisimple. So $G\cap S_i=G$ for all $i$. Of course this can happen only for one $i$. In other words, $S$ is isotypic and $G\subset S$. Since $G$ is not compact, $S$ has to be non-compact (isotropic); so $S^+$, the subgroup generated by unipotent elements, is a finite index subgroup of $S$ (see Theorem 2.3.1(c) in \cite{Mar}); necessarily $G\subset S^+$ and $G$ is an open non-compact subgroup; by the Howe-Moore property for $S^+$ and Proposition \ref{open}, $G=S^+$. Let $T$ be the universal covering of any simple factor of $S$. Then $G=T^k/Z$, with $Z$ a central subgroup, and the automorphisms of $G$ lift to automorphisms of $T^k$ preserving $Z$. Now any automorphism of $T$ permutes the $k$ copies of $T$, and by the condition on characteristic subgroups, the automorphisms of $T$ preserving $Z$ act transitively on the $k$ copies of $T$.
\end{proof}

\begin{Thm}\label{structureP-ADIC}
Let $G$ be an analytic $p$-adic group, $N$ a closed non-compact normal subgroup. The following are equivalent:
\begin{itemize}
\item[(i)] $(G,N)$ has the relative Howe-Moore Property;
\item[(ii)] One of the following properties holds:
\begin{enumerate}
\item[(1)] $N$ is isomorphic to $\Q_p^n$ for some $n>0$, and the action of any open subgroup of $G$ on $N$ is non-trivial and irreducible.
\item[(2)] $N\simeq S$, where $S$ is $p$-adic of simple type.
\end{enumerate}
\end{itemize}
\end{Thm}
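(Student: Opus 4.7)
My plan is to follow the blueprint of Theorem \ref{structureLIEHM}, substituting Proposition \ref{charanal} for Proposition \ref{LIEcar}, Theorem \ref{semipad} for Theorem \ref{semilie}, and the classical Howe-Moore theorem for groups generated by unipotents in simple $\Q_p$-algebraic groups in place of its real counterpart. The easier direction $(ii)\Rightarrow(i)$ is quick: in case (2), $N\simeq S^+$ is itself a Howe-Moore group (so $(G,N)$ trivially has the relative property); in case (1), $N\simeq\Q_p^n$, I would form the semidirect product $G\ltimes N$ (with $N$ a new copy acted upon by the conjugation action of $G$), apply Theorem \ref{semipad} to conclude that $(G\ltimes N,N)$ has the relative Howe-Moore property, and then push the property down along the surjection $(g,n)\mapsto gn$ from $G\ltimes N$ onto $G$, whose kernel is the closed normal ``anti-diagonal'' $\{(n^{-1},n):n\in N\}$; Proposition \ref{perm2} then yields the result for $(G,N)$.

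For the hard direction $(i)\Rightarrow(ii)$: by Proposition \ref{perm2}, every proper closed characteristic subgroup of $N$ is compact, so Proposition \ref{charanal} applies and produces cases (a), (b), (c). In case (b), $N\simeq\Q_p^n$: for any open $H\le G$, the pair $(H,N)$ inherits the relative property by Proposition \ref{perm1}(ii); Proposition \ref{relHM} forbids non-trivial $H$-fixed characters on $N$, and Proposition \ref{perm2} forbids proper non-compact $H$-invariant closed subgroups; together these exclude any proper non-trivial $H$-invariant $\Q_p$-subspace of $N$, giving irreducibility. In case (c), $N\simeq S^k/Z$, the argument from the third case of Theorem \ref{structurerelHM} transfers directly: the $G$-action on the simple factors $S_i$ gives a continuous homomorphism $G\to\mathrm{Sym}(k)$ whose kernel $G_1$ is open (since each set $\{g:[gS_ig^{-1},S_i]=1\}$ is closed), and Proposition \ref{perm2} applied to the $G_1$-normal factor $S_1\vartriangleleft G_1$ forces $k=1$, leaving $N$ of simple type (any finite central quotient is absorbed into that description).

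The main obstacle is ruling out case (a): $R(N)$ compact open and central in $N$, with $N/R(N)$ an infinitely generated discrete group containing a non-abelian free subgroup. I plan to derive a contradiction by showing that $N/W(N)$ would have to be an infinite discrete Howe-Moore group, which is incompatible with infinite generation. First, the free subgroup lifts to a subgroup of $N$ whose non-trivial cyclic subgroups have discrete infinite image in $N/R(N)$ and hence are not precompact, so $N$ is not locally elliptic; Theorem \ref{structurerelHM} then places us in its case (2), forcing $W(N)$ to be compact and $N/W(N)$ characteristically simple. Since $R(N)\subset W(N)$ and $W(N)/R(N)$ is a compact (hence finite) subgroup of the discrete $N/R(N)$, the group $N/W(N)$ is itself discrete. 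Second, given any unitary representation $\pi$ of $N/W(N)$ without $N/W(N)$-fixed vectors, I would induce it to $G/W(N)$: by the Mackey subgroup theorem (exploiting normality of $N/W(N)$), the restriction $\mathrm{Ind}_{N/W(N)}^{G/W(N)}(\pi)\vert_{N/W(N)}$ decomposes as a direct sum of conjugates of $\pi$, each still without $N/W(N)$-fixed vectors, so the relative Howe-Moore property for $(G/W(N),N/W(N))$ (inherited via Proposition \ref{perm2}) forces this restriction, and hence $\pi$ itself, to be $C_0$. Thus $N/W(N)$ is Howe-Moore, and Lemma \ref{dihm} forces it to be finitely generated, contradicting the fact that it is a quotient of the infinitely generated $N/R(N)$ by the finite subgroup $W(N)/R(N)$.
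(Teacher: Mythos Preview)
Your handling of $(ii)\Rightarrow(i)$ and of cases (b) and (c) in $(i)\Rightarrow(ii)$ is essentially the paper's argument and is fine (with the small caveat that in case (b) you should replace an arbitrary open $H$ by $HN$ before invoking Proposition~\ref{perm1}(ii), which requires $N\subset H$; this is harmless since $N$, being abelian, acts trivially on itself).

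The gap is in your elimination of case (a). Your induction trick needs $\pi$ to be a \emph{sub}representation of $\mathrm{Ind}_{N/W(N)}^{G/W(N)}(\pi)\big|_{N/W(N)}$, but Mackey's theorem gives that only when $N/W(N)$ is \emph{open} in $G/W(N)$, so that $G/N$ is discrete and the restriction is a genuine direct sum. In general the restriction is a direct integral $\int_{G/N}^{\oplus}\pi^{g}\,d\mu(g)$, and the single fibre $\pi=\pi^{e}$ is not a direct summand; knowing that this integral is $C_0$ on $N/W(N)$ does not force $\pi$ to be $C_0$ (the class of $C_0$ representations is not closed under uniform-on-compacta limits of coefficients, so even weak containment would not suffice). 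Nothing in case (a) guarantees that $N$ is open in $G$: the Lie algebra of $N$ coincides with that of the compact group $R(N)$ and may well be strictly smaller than that of $G$. Equivalently, one would need the centralizer of $N/W(N)$ in $G/W(N)$ to be open, and you have not established this.

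The paper closes this gap by a genuinely different, linearity-based argument. Using the Noetherian property of the Zariski topology, the centralizers $Z_F$ (modulo $R(G)$) of finite subsets $F\subset N$ form a decreasing chain of Zariski-closed subgroups, which must stabilize at some $Z_F=Z_N$. Since $R(G)\cap N=R(N)$ is open in $N$, each commutator map $g\mapsto[g,f]$ pulls this open set back to an open subset of $G$; hence the finite intersection $Z_F$, and therefore $Z_N$, is open in $G$. Passing to $G/R(G)$, the infinite discrete normal subgroup $N/R(N)$ now has open centralizer, so Proposition~\ref{dino} applies and shows that $N/R(N)$ is (finite)-by-(finitely generated Howe-Moore), contradicting the infinite generation asserted in case (a). This Zariski stabilization step is precisely the ingredient your argument is missing.
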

\begin{proof}
Suppose $(G,N)$ has the relative Howe-Moore Property. By Proposition \ref{perm2}, every proper characteristic subgroup of $N$ is compact. So one of the three cases given by Proposition \ref{charanal} holds for $N$. We confront these cases to the conclusions of Theorem \ref{structurerelHM}.
\begin{itemize}
\item (Case (a) of Proposition \ref{charanal}) Suppose that the radical $R(N)$ of $N$ is compact open in $N$, and $N/R(N)$ is not finitely generated. We show that this case cannot happen. The radical $R(G)$ of $G$ is Zariski-closed in $G$, so the centralizer $Z_F$ of a subset $F$ modulo $R(G)$ is Zariski-closed as well. When $F$ is a growing finite subset of $N$, then $Z_F$ is a decreasing Zariski-closed subset of $G$. Therefore this stabilizes: there exists a finite subset $F$ of $N$ such that $Z_F=Z_N$. Now $Z_F$ is an open subgroup of $G$: indeed, as $N$ is normal in $G$, for $f\in F$ the map $c_f:G\rightarrow N:g\mapsto [g,f]$ continuous, and moreover $R(G)\cap N=R(N)$ is open in $N$ so that $Z_F=\bigcap_{f\in F}c_f^{-1}(R(G)\cap N)$ is open in $G$. Thus $Z_N$ is open in $G$.

    Consider then the pair $(G/R(G),N/(N\cap R(G)))$: it has the relative Howe-Moore property, by Proposition \ref{perm2}. Moreover $N/(N\cap R(G))$ is an infinite, discrete, normal subgroup in $G/R(G)$, whose centralizer is open (as the image of $Z_N$ in $G/R(G)$; so Proposition \ref{dino} applies to conclude that $N/(N\cap R(G))=N/R(N)$ is finitely generated, which is a contradiction.

\item (Case (b) of Proposition \ref{charanal}) $N\simeq\Q_p^n$ for some $n>0$ (this corresponds to case (1) in Theorem \ref{structurerelHM}). It follows from Propositions \ref{perm1}(ii) and \ref{perm2} that the action of $G$ on $N$ is irreducible in restriction to any open subgroup of $G$.
\item (Case (c) of Proposition \ref{charanal}) $N=S^k/Z$ with $S$ $p$-adic of simple type and $Z$ central. Then $N$ is compactly generated. By case (2.b) in Theorem \ref{structurerelHM}, we have $k=1$, i.e. $N$ is a $p$-adic group of simple type.

\end{itemize}
The converse is proved exactly as in Theorem \ref{structureLIEHM}.
\end{proof}

\end{document}